\def\today{\number\day\space\ifcase\month\or   January\or February\or
   March\or April\or May\or June\or   July\or August\or September\or
   October\or November\or December\fi\   \number\year}
\theoremstyle{definition}
\newtheorem{lma}{Lemma}[section]
\newaliascnt{thmCt}{lma}
\newtheorem{thm}[thmCt]{Theorem}
\newaliascnt{corCt}{lma}
\newtheorem{cor}[corCt]{Corollary}
\newaliascnt{propCt}{lma}
\newtheorem{prop}[propCt]{Proposition}
\newtheorem*{thm*}{Theorem}
\newtheorem*{cor*}{Corollary}
\newtheorem*{prop*}{Proposition}
\newaliascnt{pgrCt}{lma}
\newaliascnt{dfCt}{lma}
\newtheorem{df}[dfCt]{Definition}
\newaliascnt{remCt}{lma}
\newtheorem{rem}[remCt]{Remark}
\newaliascnt{remsCt}{lma}
\newaliascnt{egCt}{lma}
\newtheorem{eg}[egCt]{Example}
\newaliascnt{egsCt}{lma}
\newaliascnt{qstCt}{lma}
\newaliascnt{pbmCt}{lma}
\newaliascnt{notaCt}{lma}
\newtheorem{nota}[notaCt]{Notation}
\newcommand{\beq}{\begin{equation}}
\newcommand{\eeq}{\end{equation}}
\newcommand{\beqa}{\begin{eqnarray*}}
\newcommand{\eeqa}{\end{eqnarray*}}
\newcommand{\bal}{\begin{align*}}
\newcommand{\eal}{\end{align*}}
\newcommand{\bi}{\begin{itemize}}
\newcommand{\ei}{\end{itemize}}
\newcommand{\be}{\begin{enumerate}}
\newcommand{\ee}{\end{enumerate}}
\newcommand{\af}{\alpha}
\newcommand{\bt}{\beta}
\newcommand{\dt}{\delta}
\newcommand{\ep}{\varepsilon}
\newcommand{\zt}{\zeta}
\newcommand{\Q}{{\mathbb{Q}}}
\newcommand{\Z}{{\mathbb{Z}}}
\newcommand{\R}{{\mathbb{R}}}
\newcommand{\C}{{\mathbb{C}}}
\newcommand{\N}{{\mathbb{N}}}
\newcommand{\K}{{\mathcal{K}}}
\newcommand{\U}{{\mathcal{U}}}
\newcommand{\T}{{\mathbb{T}}}
\newcommand{\D}{{\mathcal{D}}}
\newcommand{\Ot}{{\mathcal{O}_2}}
\newcommand{\OI}{{\mathcal{O}_{\I}}}
\newcommand{\id}{{\mathrm{id}}}
\newcommand{\diag}{{\mathrm{diag}}}
\newcommand{\card}{{\mathrm{card}}}
\newcommand{\Aut}{{\mathrm{Aut}}}
\newcommand{\Ad}{{\mathrm{Ad}}}
\newcommand{\dirlim}{\varinjlim}
\newcommand{\tfae}{the following are equivalent}
\newcommand{\ifo}{if and only if }
\newcommand{\ca}{$C^*$-algebra}
\newcommand{\cas}{$C^*$-algebras}
\newcommand{\uca}{unital $C^*$-algebra}
\newcommand{\hm}{homomorphism}
\newcommand{\ATa}{A$\T$-algebra}
\newcommand{\cp}{crossed product}
\newcommand{\Rp}{Rokhlin property}
\newcommand{\I}{\infty}
\title[]{Circle actions on UHF-absorbing $C^*$-algebras}
\author{Eusebio Gardella}
\date{\today}
\thanks{This material is based upon work supported by the
  US National Science Foundation through the author's thesis advisor's Grant
DMS-1101742. The financial support is gratefully acknowledged.}
\address{Department of Mathematics, University  of Oregon,
      Eugene OR 97403-1222, USA.}
\email[]{gardella@uoregon.edu}
\subjclass[2000]{Primary 46L55; Secondary 46L35}
\keywords{Rokhlin property, UHF-algebras, Cuntz algebra $\mathcal{O}_2$, Kirchberg algebras}
\begin{document}

\begin{abstract}
We study circle actions with the Rokhlin property, in relation to their restrictions to finite subgroups.
We construct examples showing the following: the restriction of a circle action with the Rokhlin property
(even on a real rank zero $C^*$-algebra), need not have the Rokhlin property; and even if every restriction
of a given circle action has the Rokhlin property, the circle action itself need not have it. As a positive
result, we show that the restriction of a circle action with the Rokhlin property to the subgroup
$\mathbb{Z}_n$ has the Rokhlin property if the underlying algebra absorbs $M_{n^\infty}$. The condition on
the algebra is also necessary in most cases of interest.

Despite the fact that there are no circle actions with the Rokhlin property on UHF-algebras, we construct
many such actions on certain UHF-absorbing simple A$\T$-algebras. Additionally, we show that circle actions
with the Rokhlin property on $\mathcal{O}_2$-absorbing $C^*$-algebras are generic, in a suitable sense.
\end{abstract}

\maketitle

\tableofcontents

\section{Introduction}

The interplay between \ca s and dynamics has a long and rich history. Crossed products have provided some of the most interesting examples of
\ca s. Some algebraic properties are preserved under formation of crossed products in great generality. For example, crossed products of type
I $C^*$-algebras by compact groups are type I, and crossed products of nuclear $C^*$-algebras by amenable groups are nuclear, regardless of the
action. On the other hand, for preservation of other (usually stronger) properties, one must assume some kind of freeness condition on the action.
This is best seen in the commutative setting, where the Atiyah-Segal completion theorem (specifically as in the statement of Theorem 1.1.1 in \cite{Phi_Book}) shows how free actions on compact
spaces enjoy a number of nice analytic and algebraic properties.\\
\indent In the noncommutative setting, there are several different notions of freeness for actions, and many of them are surveyed in
\cite{Phi_survey}. For finite groups, and in roughly decreasing
order of strength, there are: the \Rp\ (see \cite{Izu_RpI}), the tracial
 Rokhlin property (see \cite{Phi_tracialFirst}), pointwise outerness, and hereditary saturation
(see \cite{Phi_Book}), just to mention a few. Among these, the \Rp\ is the strongest one, and is therefore less common than
the other notions of freeness. For example, the Rokhlin property for finite groups implies that the underlying algebra has non-trivial projections, ruling out the existence of such actions on many
$C^*$-algebras of interest, such as the Jiang-Su algebra $\mathcal{Z}$. There are also less obvious $K$-theoretic obstructions
to the \Rp. See Theorem 3.13 in \cite{Izu_RpI}. On the other hand, the \Rp\ implies very strong structure preservation results
for crossed products (see Theorem 2.3 in \cite{Phi_survey} for a list of properties that are preserved by Rokhlin actions, and
see \cite{HirWin_Rp}, \cite{OsaPhi_CPRP} and \cite{Phi_tracialFirst} for the proofs of most of them), and it is the hypothesis
in most theorems on classification of group actions (see Theorems 3.4 and 3.5 in \cite{Izu_RpII}, and see Theorem 4.7 in
\cite{Gar_Kir1}). These have been the main uses
of the \Rp: obtaining structure results for the \cp, and classification of actions.\\
\indent Besides finite groups, the \Rp\ has been extensively studied for automorphisms (see \cite{HerOcn_StabInteg} and
\cite{Kis_AutUHF}) and flows (see \cite{Kis_flows}). In \cite{HirWin_Rp}, Hirshberg and Winter introduced the \Rp\
for an action of a second-countable compact group on a unital \ca, and they proved that absorption of a strongly self-absorbing \ca\ and approximate
divisibility pass to crossed products by such actions. \\
\indent It is natural to try to generalize the results on the structure of the \cp\ in \cite{OsaPhi_CPRP} to arbitrary compact groups. This will
be done in \cite{Gar_CptRok}. Another natural direction is to explore the classification of Rokhlin actions of compact groups on
certain classes of classifiable \cas, generalizing or at least complementing Izumi's work for finite group actions with the \Rp. As a first step in this direction, we study Rokhlin actions of the circle on \ca s that absorb a UHF-algebra,
specifically in relation to their restrictions to finite subgroups. One of our main results, \autoref{characterize restriction has Rp}, asserts that under suitable assumptions on the UHF-algebra, all such restrictions have the
Rokhlin property. This fact, together with Izumi's classification of finite group actions with the Rokhlin
property, will be used in subsequent work to classify circle actions on UHF-absorbing \ca s.

Similarly to what happens with finite groups, Rokhlin actions of compact groups are rare, and there are \ca s that do not have any
action of any non-trivial compact group with the \Rp, such as the Cuntz algebra $\OI$ or the Jiang-Su algebra $\mathcal{Z}$. In this sense, $C^*$-algebras that
absorb $\Ot$ form a distinguished class since they have many actions
with the \Rp. In fact, the set of circle actions with the \Rp\ on a separable $\Ot$-absorbing \ca\ is a dense $G_\delta$-set in the space of all
circle actions. See \autoref{rokhlin action are a Gdelta set}.

\vspace{0.3cm}

This paper is organized as follows. We establish the notation that will be used throughout in the rest of the Introduction.
In Section 2, we introduce the definition of the Rokhlin property for circle actions on \uca, and derive
some of its basic properties which will be frequently used in the later sections. We also provide a number of examples of circle actions on
\ca s with the \Rp, mostly on simple \ca s. In contrast, we show in \autoref{no direct limit actions with the Rp on AF-algebras} that
no direct limit action of the circle on a UHF-algebra can have the \Rp. In Subsection 2.2, we specialize to circle actions on $C^*$-algebras
that absorb the Cuntz algebra $\Ot$, a class of \ca s which is special from the point of view of the Rokhlin property.
We show in \autoref{rokhlin action are a Gdelta set} that circle actions with the \Rp\ are
generic on separable, unital, $\Ot$-absorbing \ca s, a fact that
should be contrasted with \autoref{no direct limit actions with the Rp on AF-algebras}.\\
\indent Section 3 is devoted to showing the following: if $A$ is a separable, \uca\ that absorbs the UHF-algebra $M_{n^\I}$, and if $\alpha
\colon \T\to\Aut(A)$ is an action with the \Rp, then the restriction of $\alpha$ to the finite cyclic group $\Z_n\subseteq \T$ has the \Rp.
See \autoref{characterize restriction has Rp}. The condition that $A$ absorb $M_{n^\I}$ is shown to be necessary in most cases of interest;
see \autoref{converse to restriction has Rp if algebra absorbs UHF}. We also give examples of circle actions with the Rokhlin property
such that \emph{no} restriction to any finite cyclic group has the \Rp; see \autoref{eg restriction on S1} and
\autoref{eg restriction on pi ca}. Additionally, \autoref{all restrictions have Rp, but action does not}
and \autoref{all restrictions have Rp, but action does not on pi} show that even if a circle action has the
property that \emph{every} restriction to a finite subgroup has the Rokhlin property, the action itself need
not have the Rokhlin property, even on Kirchberg algebras satisfying the UCT.

\textbf{Acknowledgements.} The author is grateful to his advisor Chris Phillips for a number of helpful
conversations, as well as for feedback on an earlier draft of this work. He also wishes to thank the annonymous referee
for a number of suggestions that improved the quality of the present paper. 

\subsection{Notation and preliminaries}
We adopt the convention that $\{0\}$ is not a \uca, that is, we require that $1\neq 0$ in a \uca. We take $\N=\{1,2,\ldots\}$. For a separable \ca\ $A$, we denote by $\Aut(A)$ the automorphism group of $A$, which is equipped with the topology of pointwise norm convergence. In this topology, a sequence $(\varphi_n)_{n\in\N}$ converges to $\varphi\in\Aut(A)$ if and only if for every $\varepsilon>0$ and every compact set $F\subseteq A$, there exists $m\in\N$ such that $\|\varphi_m(a)-\varphi(a)\|<\varepsilon$ for all $a\in F$.\\
\indent If $A$ is moreover unital, then $\U(A)$ denotes the unitary group of $A$, and two automorphisms $\varphi$ and $\psi$ of $A$ are said to be approximately unitarily equivalent if $\varphi\circ\psi^{-1}$ is approximately inner.\\
\indent For a locally compact group $G$, an action of $G$ on a $C^*$-algebra $A$ is always assumed to be a \emph{continuous} group homomorphism from $G$ into $\Aut(A)$, unless otherwise stated. If $\alpha\colon G\to\Aut(A)$ is an action of $G$ on $A$, then we will denote by $A^\alpha$ the fixed-point subalgebra of $A$ under $\alpha$. \\
\indent For a \ca\ $A$, we set
\begin{align*} \ell^\I(\N,A)&=\left\{(a_n)_{n\in\N}\in A^{\N}\colon \sup_{n\in\N}\|a_n\|<\I\right\};\\
c_0(\N,A)&=\left\{(a_n)_{n\in\N}\in\ell^\I(\N,A)\colon \lim\limits_{n\to\I}\|a_n\|= 0\right\};\\
A_\I&= \ell^\I(\N,A)/c_0(\N,A).\end{align*}
We identify $A$ with the constant sequences in $\ell^\I(\N,A)$ and with their image in $A_\I$. We write $A_\I\cap A'$ for the central sequence algebra of $A$, that is, the relative commutant of $A\in A_\I$. For a bounded sequence $(a_n)_{n\in\N}\in A$, we denote by $\overline{(a_n)}_{n\in\N}$ its image in $A_\I$. We have
$$A_\I\cap A'=\left\{\overline{(a_n)}_{n\in\N}\in A_\I \colon \lim\limits_{n\to\I}\|a_na-aa_n\|= 0 \mbox{ for all } a\in A\right\}.$$
\indent If $\alpha\colon G\to\Aut(A)$ is an action of $G$ on $A$, then there are actions of $G$ on $A_\I$ and on $A_\I\cap A'$, both denoted by $\alpha_\I$. Note that unless the group $G$ is discrete, these actions will not be continuous in general.\\
\indent Given $n\in\{2,3,\ldots,\I\}$, we denote by $\mathcal{O}_n$ the Cuntz algebra with canonical generators $\{s_j\}_{j=1}^n$ satisfying the usual relations (see for example Section 4.2 in \cite{Ror_BookClassif}).\\
\indent We denote the circle group by $\T$, and identify it with the set of complex numbers of modulus 1. The finite cyclic group of order $n$ will be denoted by $\Z_n$, and we will usually identify $\Z_n$ with the $n$-th roots of unity in $\T$, and in this fashion we will regard $\Z_n$ as a subgroup of the circle.

\section{The Rokhlin property: rigidity and genericity}

We begin this section by recalling the definition of the \Rp\ for a finite group action on a \uca.

\begin{df}\label{RP} Let $A$ be a \uca, let $G$ be a finite group, and let $\alpha\colon G\to\Aut(A)$ be an action.
We say that $\alpha$ has the \emph{\Rp} if for every $\varepsilon>0$ and for every finite set $F\subseteq A$ there exist orthogonal
projections $e_g\in A$ for $g\in G$ such that
\be\item $\|\alpha_g(e_h)-e_{gh}\|<\varepsilon$ for all $g$ and $h\in G$
\item $\|e_ga-ae_g\|<\varepsilon$ for all $g\in G$ and all $a\in F$
\item $\sum\limits_{g\in G}e_g=1$.\ee\end{df}

The definition of the \Rp\ for finite group actions on \ca s was originally introduced by Izumi in \cite{Izu_RpI}, although a
similar
notion has been studied by Herman and Jones in \cite{HerJon_period} for $\Z_2$ actions on UHF-algebras, and by Herman and Ocneanu in
\cite{HerOcn_StabInteg} for integer actions. The Rokhlin property also played a crucial role in the classification
of finite group actions on von Neumann algebras.\\
\indent The following is part of Proposition 2.14 in \cite{Phi_survey}, and we include the proof for the convenience of the reader.
This result should be compared with \autoref{eg restriction on S1} and \autoref{eg restriction on pi ca}.

\begin{prop}\label{finite gp Rp properties} Let $A$ be a \uca, let $G$ be a finite group, and let $\alpha\colon G\to\Aut(A)$ be an action with the \Rp.
If $H\subseteq G$ is a subgroup, then $\alpha|_H$ has the \Rp.\end{prop}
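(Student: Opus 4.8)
The plan is to start from Rokhlin projections $(e_g)_{g\in G}$ for $\alpha$ associated to a given $\varepsilon>0$ and finite set $F\subseteq A$, and to manufacture Rokhlin projections for $\alpha|_H$ indexed by $H$ by summing over cosets. Concretely, fix a complete set of representatives $g_1,\dots,g_k$ for the right cosets $H\backslash G$, so that $G=\bigsqcup_{i=1}^k Hg_i$, and for each $h\in H$ set $f_h=\sum_{i=1}^k e_{hg_i}$. Since the $e_g$ are orthogonal projections and the index sets $\{hg_i: h\in H\}$ are pairwise disjoint as $i$ varies (they are distinct cosets) and also disjoint as $h$ varies within a fixed coset, each $f_h$ is a projection and the $f_h$ are mutually orthogonal; moreover $\sum_{h\in H}f_h=\sum_{g\in G}e_g=1$. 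This immediately gives conditions (2) and (3) of the definition of the Rokhlin property for $\alpha|_H$: condition (3) is exact, and condition (2) follows because $\|f_h a-af_h\|\le\sum_{i=1}^k\|e_{hg_i}a-ae_{hg_i}\|<k\varepsilon$ for all $a\in F$.

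The remaining point is the equivariance condition (1): for $h,h'\in H$ one needs $\|\alpha_h(f_{h'})-f_{hh'}\|$ small. Here $\alpha_h(f_{h'})=\sum_{i=1}^k\alpha_h(e_{h'g_i})$, and each term is within $\varepsilon$ of $e_{hh'g_i}$ by condition (1) for the original family; summing, $\|\alpha_h(f_{h'})-\sum_{i=1}^k e_{hh'g_i}\|<k\varepsilon$, and $\sum_{i=1}^k e_{hh'g_i}=f_{hh'}$ since $\{hh'g_i\}$ is again a transversal for $H\backslash G$ (left translation by $hh'\in H$ permutes the right cosets of $H$... more precisely, $Hg_i\mapsto H(hh')g_i=Hg_i$ only up to relabeling, but in any case $\{(hh')g_i\}_i$ is a set of coset representatives because $hh'g_i$ and $hh'g_j$ lie in the same right coset iff $g_i,g_j$ do). Thus $\|\alpha_h(f_{h'})-f_{hh'}\|<k\varepsilon$ for all $h,h'\in H$.

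Putting this together: given $\varepsilon'>0$ and a finite $F\subseteq A$, apply the Rokhlin property of $\alpha$ with tolerance $\varepsilon=\varepsilon'/k$, where $k=[G:H]$, and the projections $f_h$ constructed above satisfy all three conditions of the definition with tolerance $\varepsilon'$. Since $\varepsilon'$ and $F$ were arbitrary, $\alpha|_H$ has the Rokhlin property.

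I do not expect a genuine obstacle here; the only thing requiring a moment's care is the bookkeeping in the equivariance step, namely verifying that left translation by an element of $H$ sends a transversal for the right cosets $H\backslash G$ to another transversal, so that $\sum_i e_{hh'g_i}=f_{hh'}$ on the nose. Once that is noted, the proof is a routine coset-summing argument.
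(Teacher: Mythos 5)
Your proposal is correct and follows essentially the same route as the paper: sum the Rokhlin projections for $\alpha$ over a coset transversal to get Rokhlin projections for $\alpha|_H$, after running the Rokhlin property of $\alpha$ with tolerance $\varepsilon/[G:H]$. The only difference is cosmetic (you index by right cosets $H\backslash G$, which makes the bookkeeping cleanest, and your worry in the equivariance step is moot since $f_{hh'}$ is by definition the sum $\sum_i e_{(hh')g_i}$ over the same fixed transversal).
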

\begin{proof} Set $n=\card(G/H)$. Given $\varepsilon>0$ and a finite subset $F\subseteq A$, choose projections $e_g$ for $g\in G$ as in the
definition of the \Rp\ for $F$ and $\frac{\ep}{n}$. We claim that the projections $f_h=\sum\limits_{\overline{x}\in G/H} e_{hx}$ for $h\in H$, form a
family of Rokhlin projections for the action $\alpha|_H$, the finite set $F$ and tolerance $\varepsilon$. \\
\indent Given $h$ and $k\in H$, we have
\begin{align*} \|\alpha_k(f_h)-f_{kh}\|&=\left\|\sum\limits_{\overline{x}\in G/H} \alpha_k(e_{hx})-e_{khx}\right\|\\
 &\leq \sum\limits_{\overline{x}\in G/H}\left\|  \alpha_k(e_{hx})-e_{khx}\right\| \leq \mbox{card}(G/H)\frac{\ep}{n}=\ep.
\end{align*}
Finally, for $a\in F$ and $h\in H$, we have
\[\|af_h-f_ha\|\leq \sum\limits_{\overline{x}\in G/H}\left\| ae_{hx}-e_{hx}a\right\|< \ep.\]\end{proof}

Hirshberg and Winter defined the \Rp\ for an arbitrary action of a compact, second countable group in \cite{HirWin_Rp}. In the case of
the circle, and using semiprojectivity of $C(\T)$, one can show that their definition is equivalent to the following.

\begin{df}\label{def RP} Let $A$ be a unital \ca\ and let $\alpha\colon\T\to\Aut(A)$ be a continuous action. Then $\alpha$ is said to have the \emph{Rokhlin property} if for every finite subset $F\subseteq A$ and every $\varepsilon>0$, there exists a unitary $u\in\U(A)$ such that
\be\item $\|\alpha_\zeta(u)-\zeta u\|<\varepsilon$ for all $\zeta\in \T$ and
\item $\|ua-au\|<\varepsilon$ for all $a\in F$.\ee\end{df}

\begin{rem} In order to check condition (2) in \autoref{def RP}, it is enough to consider finite subsets of any set of
 generators of $A$. It is also immediate to show that if in
\autoref{def RP} one allows the set $F$ to be compact instead of finite, one obtains an equivalent definition.
These easy observations will be used repeatedly and without reference.\end{rem}

We present some basic properties of circle actions with the Rokhlin property, some of which resemble those of free actions on spaces. For instance, the
proposition below is the analog of the fact that a diagonal action on a product space is free if one of the factors is free.

\begin{prop}\label{Properties RP}
Let $A$ be a \uca, and let $\af \colon \T \to \Aut (A)$ have the \Rp. If $\bt \colon \T \to \Aut (B)$ is any action of $\T$ on a \uca,
then the tensor product action $\zeta \mapsto \af_\zeta \otimes \bt_\zeta$ of $\T$ on $\Aut (A \otimes B),$ for any $C^*$-tensor product
on which it is defined, has the \Rp.\end{prop}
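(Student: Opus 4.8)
The plan is to exploit the Rokhlin unitary for $\af$ directly, tensoring it with the identity of $B$ to produce a Rokhlin unitary for $\zeta \mapsto \af_\zeta \otimes \bt_\zeta$. First I would fix a finite set $F \subseteq A \otimes B$ and $\ep > 0$; by the remark after Definition \ref{def RP} it suffices to treat the case where $F$ consists of elementary tensors $a \otimes b$ with $\|a\| \leq 1$ and $\|b\| \leq 1$, since such elements span a dense subalgebra of $A \otimes B$ (for any $C^*$-tensor norm the algebraic tensor product is dense). Let $F_0 \subseteq A$ be the finite set of left-hand factors appearing in $F$. Apply the \Rp\ for $\af$ to the finite set $F_0$ and tolerance $\ep$ to obtain a unitary $u \in \U(A)$ with $\|\af_\zeta(u) - \zeta u\| < \ep$ for all $\zeta \in \T$ and $\|u a - a u\| < \ep$ for all $a \in F_0$.

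Next I would set $v = u \otimes 1_B \in \U(A \otimes B)$; this is a unitary in any $C^*$-completion since $\|v\| = 1$ and $v^* v = v v^* = 1$. Condition (1) of Definition \ref{def RP} follows immediately: $(\af_\zeta \otimes \bt_\zeta)(v) - \zeta v = (\af_\zeta(u) - \zeta u) \otimes 1_B$, which has norm at most $\|\af_\zeta(u) - \zeta u\| < \ep$, using that for elementary tensors $\|x \otimes 1_B\| = \|x\|$ in any $C^*$-tensor norm (the inclusion $A \hookrightarrow A \otimes B$, $x \mapsto x \otimes 1_B$, is isometric). For condition (2), given $a \otimes b \in F$ with $a \in F_0$, compute
\[
v(a \otimes b) - (a \otimes b)v = (ua - au) \otimes b,
\]
so $\|v(a \otimes b) - (a \otimes b)v\| \leq \|ua - au\|\,\|b\| < \ep$. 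This verifies both Rokhlin conditions for $v$, $F$, and $\ep$, completing the proof.

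The only point requiring care — and the main obstacle, though a minor one — is the tensor-norm bookkeeping: one must be sure that $x \mapsto x \otimes 1_B$ is isometric for \emph{every} $C^*$-tensor norm on which the action is defined, and that the algebraic tensor product is dense so that reducing to elementary tensors in $F$ is legitimate. Both are standard: isometry of $A \hookrightarrow A \otimes_\beta B$ holds for any $C^*$-norm $\beta$ because $A \otimes_{\min} B$ sits below $A \otimes_\beta B$ and the minimal norm restricts to the original norm on $A \otimes 1_B$, while density of the algebraic tensor product is part of the definition of a $C^*$-tensor product. With these observations in hand, the argument is a direct one-line estimate in each case.
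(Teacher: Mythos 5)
Your proposal is correct and follows essentially the same route as the paper: apply the Rokhlin property of $\af$ to the left-hand tensor factors, take $v = u \otimes 1_B$, and verify both conditions of Definition \ref{def RP} by the same one-line estimates. The extra remarks on isometry of $x \mapsto x \otimes 1_B$ and density of the algebraic tensor product are implicit in the paper's argument, so nothing of substance differs.
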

\begin{proof} Let $\varepsilon>0$ and let $F'\subseteq A$ and $F''\subseteq B$ be finite subsets of the respective unit balls of $A$ and $B$.
Set
$$F=\{a\otimes b\colon a\in F',b\in F''\},$$
which is a finite subset of $A\otimes B$. Using the \Rp\ for $\alpha$, choose a unitary $u\in A$ such that the conditions in \autoref{def RP}
are satisfied for $\ep$ and $F'$.
Set $v=u\otimes 1\in \U(A\otimes B)$. For $x=a\otimes b\in F$, we have
$$\|vx-xv\|=\|(ua-au)\otimes b\|\leq \|ua-au\|\|b\|<\ep.$$
On the other hand,
\begin{align*}\|(\alpha\otimes\beta)_\zeta(v)-\zeta v\|&=\|(\alpha_\zeta(u)\otimes \beta_\zeta(1))-\zeta (u\otimes 1)\|=\|\alpha_\zeta(u)-\zeta u\|<\varepsilon,\end{align*}
for all $\zeta\in\T$, which finishes the proof. \end{proof}

We point out that a tensor product action may have the \Rp\ without any of the tensor factors having the \Rp, even if one of the actions
is the trivial action. This is analogous to
the fact that a diagonal action on a product
space may be free without any of the factors being free, except that such examples with the trivial action as one of the factors do not exist in
the commutative setting.\\
\indent The proposition below is the analog of the fact that an equivariant inverse limit of free actions is free.

\begin{prop} \label{direct limit and Rp} If $A = \varinjlim (A_n,\iota_n)$ is a direct limit of \ca s with unital maps, and
$\af \colon \T \to \Aut (A)$ is an action obtained as the direct limit of actions $\af^{(n)} \colon \T \to \Aut (A_n),$ such that
$\af^{(n)}$ has the \Rp\  for all $n,$ then $\af$ has the \Rp. \end{prop}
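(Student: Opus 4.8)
The plan is to approximate elements of the direct limit $A$ by elements coming from a single stage $A_n$, solve the Rokhlin problem there using the hypothesis on $\af^{(n)}$, and push the resulting unitary forward. More precisely, fix a finite subset $F \subseteq A$ and $\ep > 0$. Since $A = \varinjlim(A_n, \iota_n)$, the union of the images $\bigcup_n \iota_{n,\infty}(A_n)$ is dense in $A$, so I would first choose $n$ large enough and a finite set $F_0 \subseteq A_n$ such that every element of $F$ is within $\ep/3$ of the image $\iota_{n,\infty}(F_0)$. Because the connecting maps and $\iota_{n,\infty}$ are unital $*$-homomorphisms, $\af^{(n)}$-equivariant by hypothesis, I can work entirely inside $A_n$.

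Next I would apply the Rokhlin property for $\af^{(n)}$ on $A_n$ to the finite set $F_0$ and tolerance $\ep/3$, obtaining a unitary $u_0 \in \U(A_n)$ with $\|\af^{(n)}_\zeta(u_0) - \zeta u_0\| < \ep/3$ for all $\zeta \in \T$ and $\|u_0 a_0 - a_0 u_0\| < \ep/3$ for all $a_0 \in F_0$. Set $u = \iota_{n,\infty}(u_0) \in \U(A)$; this is a unitary because $\iota_{n,\infty}$ is unital. Equivariance of $\iota_{n,\infty}$ gives $\af_\zeta(u) = \iota_{n,\infty}(\af^{(n)}_\zeta(u_0))$, so that
\[
\|\af_\zeta(u) - \zeta u\| = \|\iota_{n,\infty}(\af^{(n)}_\zeta(u_0) - \zeta u_0)\| \leq \|\af^{(n)}_\zeta(u_0) - \zeta u_0\| < \ep/3 < \ep
\]
for all $\zeta \in \T$, which verifies condition (1) of Definition \ref{def RP}. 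For condition (2), given $a \in F$ pick $a_0 \in F_0$ with $\|a - \iota_{n,\infty}(a_0)\| < \ep/3$; then a standard three-$\ep$ estimate using $\|u\| = 1$ and $\|\iota_{n,\infty}(u_0 a_0 - a_0 u_0)\| \leq \|u_0 a_0 - a_0 u_0\| < \ep/3$ gives $\|ua - au\| < \ep$.

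I do not expect a serious obstacle here: the argument is the routine "approximate, work at a finite stage, push forward" pattern for direct limits, and the only points that need care are (a) checking that the limit action $\af$ is indeed defined by the compatible formulas $\af_\zeta \circ \iota_{n,\infty} = \iota_{n,\infty} \circ \af^{(n)}_\zeta$, which is part of what it means for $\af$ to be a direct limit of the $\af^{(n)}$, and (b) the bookkeeping of tolerances, which is trivial. One should also note that unitality of the connecting maps (hypothesized) is used twice — to ensure $u$ is a genuine unitary of $A$ and to make sense of condition (1) with the scalar $\zeta$ — so the statement would fail without it. The continuity of $\zeta \mapsto \af_\zeta(u) - \zeta u$ is inherited from that of $\af^{(n)}$, so the supremum over $\T$ causes no issue.
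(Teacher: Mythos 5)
Your proposal is correct and is essentially identical to the paper's proof: approximate the finite set at a finite stage, apply the Rokhlin property of $\af^{(n)}$ there, and push the resulting unitary forward via the (unital, equivariant) map into the limit, with the same three-$\ep$ bookkeeping.
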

\begin{proof} Let $F\subseteq A$ be a finite set, and let $\varepsilon>0$. Write $F=\{a_1,\ldots,a_N\}$. Since
$\bigcup\limits_{n\in\N} \iota_{n,\I}(A_n)$ is dense in $A$, there exist $n\in\N$ and $F'=\{b_1,\ldots,b_N\}\subseteq A_n$ such that
$\|a_j-\iota_{n,\I}(b_j)\|<\frac{\ep}{3}$ for $j=1,\ldots,N$. Since $\af^{(n)}$ has the \Rp, there exists a unitary $u\in A_n$ such
that $\left\|\af^{(n)}_\zeta(u)-\zeta u\right\|<\frac{\ep}{3}$ for all $\zeta\in \T$ and $\|b_ju-ub_j\|<\frac{\ep}{3}$ for all $j=1,\ldots,N$.
Notice that $\iota_{n,\I}(u)$ is a unitary in $A$, since the connecting maps are unital. Moreover, if $\zeta\in\T$, then
$$\|\af_\zeta(\iota_{n,\I}(u))-\zeta \iota_{n,\I}(u)\|=\left\|\iota_{n,\I}(\alpha^{(n)}_\zt(u))-\iota_{n,\I}(\zeta u)\right\| <\frac{\ep}{3}<\varepsilon.$$
Finally,
\begin{align*} &\|\iota_{n,\I}(u)a_j-a_j\iota_{n,\I}(u)\|\\
 &\ \ \ \ \ \ \leq \|\iota_{n,\I}(u)a_j-\iota_{n,\I}(u)\iota_{n,\I}(b_j)\|+\|\iota_{n,\I}(u)\iota_{n,\I}(b_j)-\iota_{n,\I}(b_j)\iota_{n,\I}(u)\|\\
&\ \ \ \ \ \  \ \ \ \ \ \ \ \ \ \ \ \ \ \ +\|\iota_{n,\I}(b_j)\iota_{n,\I}(u)-a_j\iota_{n,\I}(u)\|\\
&\ \ \ \ \ \ \leq\|a_j-\iota_{n,\I}(b_j)\|+\|ub_j-b_ju\|+\|\iota_{n,\I}(b_j)-a_j\|\\
&\ \ \ \ \ \ <\frac{\ep}{3}+\frac{\ep}{3}+\frac{\ep}{3}=\varepsilon.\end{align*}
Hence $\iota_{n,\I}(u)$ is the desired unitary for $F$ and $\varepsilon$, and thus $\alpha$ has the \Rp. \end{proof}

We have the following convenient result, which turns out to be crucial in some proofs, in particular in the classification of
Rokhlin actions of the circle on Kirchberg algebras; see \cite{Gar_Kir1} and \cite{Gar_Kir2}. In the present
work, we will use \autoref{can replace estimate by equality} in the proof of \autoref{W(F,epsilon) is dense}.

\begin{prop}\label{can replace estimate by equality} Let $A$ be a \uca, let $\alpha\colon \T\to\Aut(A)$ be an action with the Rokhlin property,
let $\ep>0$ and let $F\subseteq A$ be a finite subset. Then there exists a unitary $u\in A$ such that
\be\item $\alpha_\zeta(u)=\zeta u$ for all $\zeta\in\T$.
\item $\|ua-au\|<\varepsilon$ for all $a\in F$.\ee\end{prop}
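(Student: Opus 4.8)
The plan is to perturb an approximate Rokhlin unitary, \emph{within the degree-one spectral subspace of $\alpha$}, into an exact eigenvector. Write $A_1=\{a\in A:\alpha_\zeta(a)=\zeta a \text{ for all }\zeta\in\T\}$ for the first spectral subspace and let $\Phi_1\colon A\to A$ be the associated spectral projection,
\[
\Phi_1(a)=\int_\T \overline\zeta\,\alpha_\zeta(a)\,d\zeta,
\]
where $d\zeta$ is normalized Haar measure on $\T$; the integrand is norm-continuous since $\alpha$ is, so this is well defined. Using invariance of Haar measure one checks the routine facts that $\Phi_1$ is linear and contractive, that $\Phi_1(A)\subseteq A_1$, that $\Phi_1$ restricts to the identity on $A_1$, and — the estimate that matters — that $\|\Phi_1(a)-a\|\le\sup_{\zeta\in\T}\|\alpha_\zeta(a)-\zeta a\|$ for all $a\in A$.

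Next I would fix $\ep>0$ and $F$, set $M=1+\max_{a\in F}\|a\|$ (the case $F=\varnothing$ being trivial), and choose a tolerance $\delta>0$ with $\delta<1/4$, to be pinned down at the end. Applying the \Rp\ to $F$ and $\delta$ gives a unitary $v\in\U(A)$ with $\|\alpha_\zeta(v)-\zeta v\|<\delta$ for all $\zeta$ and $\|va-av\|<\delta$ for $a\in F$. Put $w=\Phi_1(v)\in A_1$. The estimate above yields $\|w-v\|<\delta<1$, so $w$ is invertible, hence $w^*w$ is positive and invertible; moreover $w^*w$ lies in the fixed-point algebra $A^\alpha$, because $\alpha_\zeta(w^*w)=(\overline\zeta w^*)(\zeta w)=w^*w$. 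Consequently $|w|^{-1}=(w^*w)^{-1/2}$ also lies in $A^\alpha$, so the unitary part $u=w|w|^{-1}$ of the polar decomposition of $w$ satisfies $\alpha_\zeta(u)=\alpha_\zeta(w)\,\alpha_\zeta(|w|^{-1})=\zeta u$ for all $\zeta\in\T$. This is conclusion (1).

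It remains to control $\|u-v\|$. From $\|w-v\|<\delta$ we get $\|w^*w-1\|=\|w^*w-v^*v\|\le 2\delta+\delta^2$, which confines the spectrum of $|w|$ to a small interval about $1$ and gives $\|1-|w|\|\le 2\delta+\delta^2\le 3\delta$; since $w=u|w|$ with $u$ unitary, $\|u-w\|=\|1-|w|\|\le 3\delta$, whence $\|u-v\|\le 4\delta$. Then for $a\in F$,
\[
\|ua-au\|\le 2\|a\|\,\|u-v\|+\|va-av\|\le(8\|a\|+1)\delta<(8M+1)\delta,
\]
so taking $\delta=\min\{1/4,\ \ep/(8M+1)\}$ (or anything strictly smaller) gives conclusion (2).

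The only step needing genuine care is checking that passing to the polar part stays inside $A_1$ and moves $v$ by only a controlled amount; both rely on the single observation that $w^*w$ is an honest fixed point of $\alpha$, together with the elementary functional-calculus comparison between $\|h^2-1\|$ and $\|h-1\|$ for positive $h$. Everything else is bookkeeping, and notably no separability or simplicity assumption on $A$ is used.
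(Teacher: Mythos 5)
Your argument is correct and is essentially the paper's own proof: both average the Rokhlin unitary $v$ against $\overline{\zeta}$ over Haar measure to produce an element of the degree-one spectral subspace close to $v$, then pass to the unitary in its polar decomposition, using that $w^*w$ is $\alpha$-invariant to keep the eigenvector relation exact. The only differences are bookkeeping (your functional-calculus bound $\|1-|w|\|\le\|1-|w|^2\|$ versus the paper's estimate via $0\le 1-(x^*x)^{1/2}\le 1-x^*x$, and your choice of $\delta$ in terms of $M$ rather than normalizing $F$), so there is nothing to add.
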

The definition of the Rokhlin property differs from the conclusion of this proposition in that in condition (1), one only requires
$\|\alpha_\zeta(u)-\zeta u\|<\varepsilon$ for all $\zeta\in\T$.
\begin{proof} One can normalize $F$ so that $\|a\|\leq 1$ for all $a\in F$.
Set $\ep_0=\min\left\{\frac{1}{3},\frac{\ep}{7+2\ep}\right\}$. Choose a unitary $v\in A$ such that conditions (1) and (2) in
\autoref{def RP} are satisfied for the finite set $F$ with
$\varepsilon_0$ in place of $\varepsilon$. Denote by $\mu$ the normalized Haar measure on $\T$, and set
$$x =\int_{\T}\overline{\zeta}\alpha_\zeta(v)\ d\mu(\zeta).$$
Then $\|x\|\leq 1$ and $\|x-v\|\leq \varepsilon_0$. One checks that $\|x^*x-1\|\leq 2\varepsilon_0< 1$ and that $\alpha_\zt(x)=\zt x$
for all $\zt\in \T$. In particular, $x^*x$ is invertible. \\
\indent We have
$$\left\|(x^*x)^{-1}\right\|\leq \frac{1}{1-\|1-x^*x\|}\leq\frac{1}{1-2\ep_0},$$
and thus $\left\|(x^*x)^{-\frac{1}{2}}\right\|\leq\frac{1}{\sqrt{1-2\ep_0}}$.\\
\indent Set $u = x(x^*x)^{-\frac{1}{2}}$, which is a unitary in $A$. Using that $\|x\|\leq 1$ at the first step,
and that $0\leq 1-(x^*x)^{\frac{1}{2}}\leq 1-x^*x$ at the third step, we get
\begin{align*} \|u-x\| & \leq \left\|(x^*x)^{-\frac{1}{2}}-1\right\|\\
& \leq \left\|(x^*x)^{-\frac{1}{2}}\right\|\left\|1-(x^*x)^{\frac{1}{2}}\right\|\\
& \leq \frac{1}{\sqrt{1-2\ep_0}}\|1-x^*x\|\leq \frac{2\ep_0}{\sqrt{1-2\ep_0}}.
\end{align*}
We deduce that
$$ \|u-v\|\leq \frac{2\ep_0}{\sqrt{1-2\ep_0}}+\ep_0.$$
For $\zt\in \T$, we have $\alpha_\zt(x^*x)=x^*x$ and hence $\alpha_\zeta(u)=\zeta u,$
so $u$ satisfies condition (1) of the statement. Finally, for $a\in F$, we have
\begin{align*} \|ua-au\| & \leq \|ua-va\|+\|va-av\|+\|av-au\|\\
&< \|u-v\|\|a\|+\ep_0+\|a\|\|v-u\|\\
&\leq  \frac{4\ep_0}{\sqrt{1-2\ep_0}}+3 \ep_0< \frac{4\ep_0}{1-2\ep_0}+3\ep_0\\
&< \frac{7\ep_0}{1-2\ep_0}<\ep,
\end{align*}
as desired.
\end{proof}

\begin{rem}\label{rem:LtEqSj} In the language of \cite{PhiSorThi_eqsj}, the proof of \autoref{can replace estimate by equality}
shows that the action of $\T$ on $C(\T)$ induced by group multiplication, is equivariantly semiprojective.
This fact seems not to have been known before. \end{rem}

We now turn to examples of circle actions with the Rokhlin property. As in the finite group case, the \Rp\ is rare, and it is challenging to
construct many examples on simple \ca s. We will give an explicit construction of a family of circle actions with the \Rp\ on simple \ATa s,
and also on the Cuntz algebra $\Ot$. For more examples on purely inifinite \ca s, see \cite{Gar_Kir2} (the construction of the
examples there is not explicit).

\begin{eg}\label{eg of Rp on simple AT-algebra} This is an example of a circle action on a simple, unital \ATa\ with the Rokhlin property. For
$n\in\N$, set $A_n=C(\T )\otimes M_{n!}$. Consider the action $\alpha^{(n)}\colon\T\to\Aut(A_n)$ given by $\alpha^{(n)}_\zeta(f)(w)=f(\zeta^{-1}w)$
for $\zeta$ and $w\in \T$ and for $f\in A_n\cong C(\T ,M_{n!})$. In other words, $\alpha^{(n)}$ is the tensor product of the action of left
translation of $\T $, with the trivial action on $M_{n!}$. Then $\alpha^{(n)}$ has the Rokhlin property by \autoref{Properties RP}, since
the action of left translation of $\T$ on itself trivially has the Rokhlin property.\\
\indent We construct a direct limit algebra $A=\dirlim (A_n,\iota_n)$ as follows. Fix a countable dense subset $X=\{x_1,x_2,x_3,\ldots\}\subseteq
\T $, and assume that $x_1=1$. With $f_{x}(\zeta)=f(x^{-1}\zeta)$ for $f\in A_n$, for $x\in X$ and for $\zeta\in \T $, define maps $\iota_n\colon A_n\to
A_{n+1}$ for $n\in\N$, by
$$\iota_n(f)=\left(
               \begin{array}{cccc}
                 f_{1} & 0 & \cdots & 0 \\
                 0 & f_{x_2} & \cdots & 0 \\
                 \vdots & \vdots& \ddots & \vdots \\
                 0 & 0 & \cdots & f_{x_n} \\
               \end{array}
             \right)$$
for every $f\in A_n$.Then $\iota_n$ is unital and injective, for all $n\in\N$. The limit algebra $A=\dirlim(A_n,\iota_n)$ is a unital \ATa.\\
\indent It is easy to check that
$$\iota_n\circ\alpha^{(n)}_\zeta=\alpha^{(n+1)}_\zeta\circ\iota_n$$
for all $n\in\N$ and all $\zeta\in\T$, so
that $\left(\alpha^{(n)}\right)_{n\in\N}$ induces a direct limit action $\alpha=\varinjlim \alpha^{(n)}$ of $\T$ on $A$. Then $\alpha$ has the
Rokhlin property by \autoref{direct limit and Rp}. Simplicity of $A$ follows from Proposition 2.1 in \cite{DadNagNemPas_RedTopSR},
since $X$ is assumed to be dense in $\T$.\end{eg}

In the example above, the universal UHF-pattern can be replaced by any other UHF or (simple)
AF-pattern, and the resulting \ca\ is also a (simple) A$\T$-algebra.

Using the absorption properties of $\Ot$, we can construct an action of the circle on $\Ot$ with the \Rp.

\begin{eg}\label{eg of Rp on Ot} Let $A$ and $\alpha$ be as in the example above. Then $A$ is a separable, unital, nuclear, simple \ca. Use Theorem 3.8 in \cite{KirPhi_embedding} to choose an isomorphism $\varphi\colon A\otimes \Ot\to \Ot$, and define an action $\gamma\colon\T\to\Aut(\Ot)$ by $\gamma_\zeta=\varphi\circ (\alpha_\zeta\otimes \id_{\Ot})\circ \varphi^{-1}$ for $\zeta\in\T$. Since $\alpha$ has the \Rp, it follows from \autoref{Properties RP} that $\gamma$ has the \Rp\ as well.\end{eg}

\begin{eg} If $A$ is any unital \ca\ such that $A\otimes \Ot\cong A$, then one can construct a circle action on $A$ with the Rokhlin property by tensoring the trivial action on $A$ with any action on $\Ot$ with the \Rp, such as the one constructed in \autoref{eg of Rp on Ot}.\end{eg}

\subsection{Nonexistence of actions with the Rokhlin property} Our next goal is to prove that UHF-algebras do not admit any direct limit action of
the circle with the \Rp; see \autoref{no direct limit actions with the Rp on AF-algebras}. We begin with an easy lemma which already rules out
such actions on matrix algebras; see \autoref{cor: no actions on matrices}.

\begin{lma} \label{lma: pointwise outer}
Let $A$ be a \uca\ and let $\alpha\colon \T\to\Aut(A)$ be an action with the \Rp. Then $\alpha_\zt$ is not inner for all $\zt\in \T$
 with $\zt\neq 1$.\end{lma}
\begin{proof}
Let $\zt\in \T\setminus\{1\}$, and assume that there exists a unitary $v\in A$ such that $\alpha_\zt=\Ad(v)$. Let $\ep>0$ satisfy
$\ep<\frac{|1-\zt|}{2}$. Using the \Rp\ for $\alpha$, find a unitary $u\in A$ such that $\|\alpha_\zt(u)-\zt u\|<\ep$
and $\|uv-vu\|<\ep$. Then
\begin{align*}
\ep &>\|\alpha_\zt(u)-\zt(u)\|=\|vuv^*-\zt u\|\geq \left| \|u-\zt u\| - \|vuv^*-u\|\right|\\
&=|1-\zt|-\|vu-uv\|> \frac{|1-\zt|}{2}>\ep,
\end{align*}
which is a contradiction. This shows that $\alpha_\zt$ is not inner.
\end{proof}

\begin{cor} \label{cor: no actions on matrices}
Let $n\in \N$. Then there are no actions of the circle on $M_n$ with the \Rp.\end{cor}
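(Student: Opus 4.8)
The plan is to deduce this immediately from Lemma \ref{lma: pointwise outer}. The only external input needed is the classical fact that every automorphism of the matrix algebra $M_n$ is inner; equivalently, $\Aut(M_n)=\Inn(M_n)$. (This is a form of the Skolem--Noether theorem: if $\ph\in\Aut(M_n)$, then the two representations $\id$ and $\ph$ of $M_n$ on $\C^n$ are unitarily equivalent, so $\ph=\Ad(v)$ for some $v\in\U(M_n)$.)

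With this in hand, I would argue by contradiction. Suppose $n\in\N$ and $\af\colon\T\to\Aut(M_n)$ is an action with the \Rp. Since $\T\neq\{1\}$, fix any $\zt\in\T$ with $\zt\neq 1$. By the fact just recalled, $\af_\zt$ is inner. But Lemma \ref{lma: pointwise outer} asserts that for an action of $\T$ on a \uca\ with the \Rp, the automorphism $\af_\zt$ is \emph{not} inner for any $\zt\neq 1$. This is the desired contradiction, so no such action exists.

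There is essentially no obstacle here: the corollary is a one-line consequence of the lemma once one invokes the innerness of all automorphisms of $M_n$. If one wished to be entirely self-contained and avoid even citing Skolem--Noether, one could instead note that the Rokhlin property would force $M_n$ to contain, for every $N\in\N$, a unitary $u$ with $\|\af_\zt(u)-\zt u\|$ small; but the obstruction in Lemma \ref{lma: pointwise outer} already packages precisely this computation, so re-deriving it would be redundant. Hence the cleanest route is simply: every $\af_\zt$ on $M_n$ is inner, contradicting Lemma \ref{lma: pointwise outer}.
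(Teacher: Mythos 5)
Your argument is exactly the paper's: invoke the fact that every automorphism of $M_n$ is inner and contradict Lemma \ref{lma: pointwise outer}, which rules out innerness of $\af_\zt$ for $\zt\neq 1$ under the \Rp. This is correct and matches the paper's one-line proof.
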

\begin{proof} This is an immediate consequence of \autoref{lma: pointwise outer}, since every automorphism of $M_n$ is inner.\end{proof}

We will generalize the corollary above in \autoref{no direct limit actions with the Rp on AF-algebras} below, where we show that there are
no direct limit actions of the circle with the \Rp\ on UHF-algebras. We need a series of preliminary results.

\begin{nota} Let $n\in\N$. We denote by $\U_n(\C)$ the unitary group of $M_n$.
Identify $\T$ with the center $\mathcal{Z}(\U_n(\C))$ of $\U_n(\C)$ via the map $\zeta\mapsto \mbox{diag}(\zeta,\ldots,\zeta)$, and denote by $P\U_n(\C)$ the quotient group $P\U_n(\C)=\U_n(\C)/\T $.
\end{nota}

\begin{prop}\label{Lemma actions on Mn} Let $n\in\N$ and let $\gamma\colon\T\to\Aut(M_n)$ be a continuous action. Then there exists a
 continuous map $v\colon \T\to\U_n(\C)$ such that $\gamma_\zeta=\Ad(v(\zeta))$ for all $\zeta\in \T$.
\end{prop}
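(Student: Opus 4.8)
The plan is to lift the action $\gamma\colon\T\to\Aut(M_n)$ along the quotient map $\U_n(\C)\to P\U_n(\C)$. Since every automorphism of $M_n$ is inner, and since two unitaries implement the same automorphism precisely when they differ by a scalar, the map $\gamma$ factors as a map $\bar\gamma\colon\T\to P\U_n(\C)$, and I would first check that $\bar\gamma$ is a continuous group homomorphism. Continuity is immediate from continuity of $\gamma$ together with the fact that $\U_n(\C)\to P\U_n(\C)$ is a quotient map of compact Lie groups (or one can argue directly using a local section). The task then reduces to lifting the homomorphism $\bar\gamma$ through the central extension
\[
1\to\T\to\U_n(\C)\to P\U_n(\C)\to 1
\]
to a continuous homomorphism $v\colon\T\to\U_n(\C)$; once we have such a $v$, we get $\gamma_\zeta=\Ad(v(\zeta))$ for all $\zeta$, which is what we want.

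For the lifting step I would use the structure of circle actions, i.e. of continuous homomorphisms out of $\T$. A continuous homomorphism $\bar\gamma\colon\T\to G$ into a compact Lie group $G$ is a one-parameter subgroup factoring through $\T$; writing $\bar\gamma(e^{2\pi i t})=\exp(tX)$ for some $X$ in the Lie algebra of $P\U_n(\C)$ with $\exp(X)=1$, and using that the Lie algebra of $P\U_n(\C)$ is canonically identified with $\{A\in M_n : \Tr(A)=0\}$ sitting inside the Lie algebra $\mathfrak{u}(n)$ of $\U_n(\C)$ (the extension is central, hence splits at the Lie algebra level), I would lift $X$ to a traceless skew-adjoint $\widetilde X\in\mathfrak{u}(n)$ and set $v(\zeta)=v(e^{2\pi i t})=\exp(t\widetilde X)$. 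This is a continuous path of unitaries with $v(1)=1$, and $\Ad(v(\zeta))=\gamma_\zeta$ since $\widetilde X$ and $X$ agree modulo scalars. The only thing to be slightly careful about is that $v$ as defined is genuinely well-defined on $\T$ (not just on $\R$): since $\exp(X)=1$ in $P\U_n(\C)$, the unitary $\exp(\widetilde X)$ is a scalar $\lambda$, and by further adjusting $\widetilde X$ by a real multiple of $\frac{2\pi i}{n}\,1_n$ (which does not change the class mod scalars) one can arrange $\exp(\widetilde X)=1_n$, making $v$ well-defined and a homomorphism on $\T$.

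The main obstacle is the well-definedness/closing-up issue just mentioned: a priori the obvious lift is only defined on the universal cover $\R$ of $\T$, and one must check that the central $\T$-extension does not obstruct descending it to $\T$ — this is where one uses that the relevant cohomological obstruction vanishes because $\widetilde X$ can be corrected by the central direction $\frac{2\pi i}{n}1_n$ to have the right exponential. (Alternatively, and perhaps more cleanly for the paper's purposes, one can avoid Lie theory entirely: the action $\gamma$ fixes the tracial state on $M_n$, hence is implemented by a strongly continuous unitary representation of $\T$ on the GNS space $\C^n$ by a standard averaging/Stone-type argument, and any such representation is a continuous map $\T\to\U_n(\C)$; I would likely present this GNS/averaging argument as the main line and relegate the Lie-theoretic description to a remark.) Either way, once $v$ is produced the identity $\gamma_\zeta=\Ad(v(\zeta))$ is automatic, completing the proof. $\QED$
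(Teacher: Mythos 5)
Your main (Lie-theoretic) argument is correct, but it takes a genuinely different route from the paper. Both proofs begin identically, factoring $\gamma$ through a continuous homomorphism $\T\to P\U_n(\C)$; the paper then treats $\rho\colon\U_n(\C)\to P\U_n(\C)$ as a fiber bundle, computes from the long exact sequence in homotopy that $\pi_1(\rho)\colon\Z\to\Z_n$ is surjective, and uses the homotopy lifting property to lift $\bar\gamma$ merely as a continuous map $v\colon\T\to\U_n(\C)$ -- which is all the statement asks for. You instead lift at the level of one-parameter subgroups: write $\bar\gamma(e^{2\pi it})=\exp(tX)$, lift $X$ to a traceless skew-adjoint $\widetilde X\in\mathfrak{u}(n)$, note $\exp(\widetilde X)=\lambda 1_n$ with $\lambda^n=1$ (determinant), and correct by an integer multiple of $\tfrac{2\pi i}{n}1_n$ so that the lift closes up on $\T$. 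This is sound (your closing-up correction is exactly right), and it buys strictly more than the paper proves: $v$ is a genuine continuous unitary \emph{representation} of $\T$ implementing $\gamma$, whereas the paper's homotopy argument only yields a continuous map; the price is reliance on smoothness of continuous homomorphisms of Lie groups and the splitting $\mathfrak{u}(n)=\mathfrak{su}(n)\oplus i\R 1_n$ (your displayed identification of the Lie algebra of $P\U_n(\C)$ should read traceless \emph{skew-adjoint} matrices, as you do use later). One caution: the parenthetical GNS/averaging argument that you say you would promote to the main line is not correct as sketched -- the GNS space of the trace on $M_n$ is $\C^{n^2}$, not $\C^n$, and the canonically implementing unitaries there live in $B(L^2(M_n,\tau))$ rather than in $M_n$, so extracting a continuous map $\T\to\U_n(\C)$ from them requires an additional cocycle-vanishing (Moore cohomology $H^2(\T,\T)=0$) or averaging argument that your sketch does not supply; keep the Lie-theoretic proof as the main line, or follow the paper's homotopy-lifting route.
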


\begin{proof} Recall that every automorphism of $M_n$ is inner, so that for every $\zeta\in\T$ there exists a unitary $u(\zeta)\in\U_n(\C)$ such that $\alpha_\zeta=\Ad(u(\zeta))$. Moreover, $u(\zeta)$ is uniquely determined up to multiplication by elements of $\T =\mathcal{Z}(\U_n(\C))$ and hence $\gamma_\zeta$ determines a continuous group homomorphism $u\colon \T\to P\U_n(\C)$. Denote by $\rho\colon \U_n(\C)\to P\U_n(\C)$ the canonical projection. We want to solve the following lifting problem:
\beqa\xymatrix{ & \U_n(\C)\ar[d]^-{\rho}\\
\T\ar@{-->}[ur]^-{v}\ar[r]_-{u}&P\U_n(\C).}\eeqa
The map $u$ determines an element $[u]\in \pi_1(P\U_n(\C))$ and $\rho$ induces a group homomorphism $\pi_1(\rho)\colon \pi_1(\U_n(\C))\to\pi_1(P\U_n(\C))$. The quotient map $\rho\colon \U_n(\C)\to P\U_n(\C)$ is actually a fiber bundle, since $\U_n(\C)$ is a manifold and the action of $\T $ on $\U_n(\C)$ is free. See the theorem in Section 4.1 of \cite{Pal_slicesNonCptLieGps}. The long exact sequence in homotopy for this fiber bundle is
\beqa\xymatrix{ \cdots \ar[r] &\pi_1(\T )\ar[r] & \pi_1(\U_n(\C))\ar[r]^-{\pi_1(\rho)} &\pi_1(P\U_n(\C)) \ar[r]& \pi_0(\T ).}\eeqa
Recall that $\pi_1(\U_n(\C))\cong\Z$, and that $\pi_0(\T )\cong 0$. The map $\pi_1(\T )\to \pi_1(\U_n(\C))$ is induced by $\zeta\mapsto \diag(\zeta,\ldots,\zeta)$, which on $\pi_1$ corresponds to multiplication by $n$. In other words, the above exact sequence is
\beqa\xymatrix{ \cdots \ar[r] &\Z\ar[r]^-{\cdot n} & \Z\ar[r]^-{\pi_1(\rho)} &\pi_1(P\U_n(\C)) \ar[r]& 0,}\eeqa
which implies that $\pi_1(P\U_n(\C))\cong \Z_n$ and that the map $\pi_1(\rho)$ is surjective. It follows that $u$ is homotopic to a map $\widehat{u}\colon \T\to P\U_n(\C)$ that is liftable. The homotopy lifting property for fiber bundles implies that $u$ itself is liftable, that is, there exists a continuous map $v\colon \T\to\U_n(\C)$ such that $u(\zeta)=\rho(v(\zeta))$ for all $\zeta\in\T$. (See the paragraph below Theorem 4.41 in \cite{Hatcher} for the definition of the homotopy lifting property. Proposition 4.48 in \cite{Hatcher} shows that every fiber bundle has this property.) This concludes the proof. \end{proof}

\begin{lma}\label{unitaries in Mn at least 2 away} Let $n\in\N$ and let $v\colon \T\to \U_n(\C)$ be a continuous map.
Then for every $u\in \U_n(\C)$, there exists $\zeta\in \T$ such that
$$\|v(\zeta)uv(\zeta)^*-\zeta u\|\geq 2.$$\end{lma}
\begin{proof} Assume that there exists $u\in\U_n(\C)$ such that $\|v(\zeta)uv(\zeta)^*-\zeta u\|< 2$ for all $\zeta\in\T$. Define $w\in C(\T,M_n)$ by
$w(\zeta)=\overline{\zeta}v(\zeta)uv(\zeta)^*u^*$ for all $\zt\in \T$. Then $w$ is a unitary in $C(\T,M_n)$ and
$\left\|w-1_{C(\T,M_n)}\right\|<2$. It follows that the spectrum of $w$ is not the whole circle, and a standard functional calculus argument using a branch of the logarithm shows that
$w$ is an exponential, that is, there is a self-adjoint element $x\in C(\T,M_n)$ with $w=e^{ix}$ (namely $x=\log(w)$). 
Then the path $t\mapsto e^{itx}$, for $t\in [0,1]$, defines a homotopy between $w$ and $1_{C(\T,M_n)}$. Define now a continuous function
$f\colon\T\to\T$ by $f=\det \circ w$. Then $f$ is homotopic to the constant map, and thus its winding number is zero.\\
\indent On the other hand,
$$f(\zeta)=\det(w(\zeta))=\det(\overline{\zeta}v(\zeta)uv(\zeta)^*u^*)=\overline{\zeta}^n,$$
so the winding number is actually $-n$. This is a contradiction, and the result follows.\end{proof}

\begin{thm}\label{no direct limit actions with the Rp on AF-algebras} Assume that $A=\varinjlim(M_{k_n},\iota_n)$ is an unital UHF-algebra with
unital connecting maps. If $\alpha=\varinjlim \alpha^{(n)}$ is a direct limit action of the circle on $A$, then $\alpha$ does not have the
Rokhlin property.\end{thm}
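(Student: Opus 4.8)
The plan is to derive a contradiction from the assumption that a direct limit action $\alpha = \varinjlim \alpha^{(n)}$ has the Rokhlin property, by pushing the unitary obstruction from Lemma \ref{unitaries in Mn at least 2 away} up through the direct limit. First I would fix a small $\varepsilon$, say $\varepsilon < 1$, and apply the Rokhlin property to the single-element finite set $F = \{0\}$ (or any finite set; the commutation condition is not what we need) together with $\varepsilon$, obtaining a unitary $u \in \U(A)$ with $\|\alpha_\zeta(u) - \zeta u\| < \varepsilon$ for all $\zeta \in \T$. Since $\bigcup_n \iota_{\infty,n}(M_{k_n})$ is dense in $A$, I would approximate $u$ within $\varepsilon$ by an element of some $\iota_{\infty,n}(M_{k_n})$, and then — using that $M_{k_n}$ has stable rank one and that the unitary group of a matrix algebra is connected, so that near-unitaries can be replaced by genuine unitaries — produce a genuine unitary $u_0 \in \U_n(\C) = \U(M_{k_n})$ with $\|\iota_{\infty,n}(u_0) - u\|$ small, say less than $\varepsilon$ after adjusting constants.

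Next I would bring the action into the picture at finite stage $n$. Since $\alpha^{(n)} \colon \T \to \Aut(M_{k_n})$ is a continuous action on a matrix algebra, Proposition \ref{Lemma actions on Mn} provides a continuous map $v \colon \T \to \U_{k_n}(\C)$ with $\alpha^{(n)}_\zeta = \Ad(v(\zeta))$ for all $\zeta \in \T$. The compatibility $\iota_{\infty,n} \circ \alpha^{(n)}_\zeta = \alpha_\zeta \circ \iota_{\infty,n}$ lets me estimate, for each $\zeta \in \T$,
\[
\|\iota_{\infty,n}(v(\zeta) u_0 v(\zeta)^* - \zeta u_0)\| = \|\alpha_\zeta(\iota_{\infty,n}(u_0)) - \zeta \iota_{\infty,n}(u_0)\|,
\]
and the right-hand side is bounded by $\|\alpha_\zeta(\iota_{\infty,n}(u_0)) - \alpha_\zeta(u)\| + \|\alpha_\zeta(u) - \zeta u\| + \|\zeta u - \zeta \iota_{\infty,n}(u_0)\| < 3\varepsilon$ (using that $\alpha_\zeta$ is isometric). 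Since $\iota_{\infty,n}$ is isometric (it is an injective $*$-homomorphism), this gives $\|v(\zeta) u_0 v(\zeta)^* - \zeta u_0\| < 3\varepsilon < 2$ for all $\zeta \in \T$, provided $\varepsilon$ was chosen with $3\varepsilon < 2$. But this directly contradicts Lemma \ref{unitaries in Mn at least 2 away}, which asserts the existence of some $\zeta \in \T$ with $\|v(\zeta) u_0 v(\zeta)^* - \zeta u_0\| \geq 2$. Hence no such $\alpha$ can have the Rokhlin property.

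The only genuinely delicate point is the replacement of the near-unitary approximant of $u$ by an honest unitary $u_0$ inside the matrix algebra $M_{k_n}$, and then tracking how the error propagates. This is routine: if $a \in M_{k_n}$ satisfies $\|\iota_{\infty,n}(a) - u\| < \varepsilon$ with $u$ unitary in $A$, then $\|a^*a - 1\| = \|\iota_{\infty,n}(a^*a - 1)\|$ is controlled by $2\varepsilon + \varepsilon^2 < 1$ for $\varepsilon$ small, so $a$ is invertible and $u_0 = a(a^*a)^{-1/2}$ is a unitary in $M_{k_n}$ with $\|u_0 - a\|$ small (the same polar-decomposition estimate used in the proof of Proposition \ref{can replace estimate by equality}), hence $\|\iota_{\infty,n}(u_0) - u\|$ is still small. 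Therefore I would simply fix $\varepsilon$ small enough at the outset (for instance $\varepsilon = \tfrac{1}{10}$) so that all these compounded estimates keep the final bound strictly below $2$, and the argument closes.
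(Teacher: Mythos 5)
Your proposal is correct and follows essentially the same route as the paper: reduce the Rokhlin unitary to a finite stage $M_{k_n}$, write $\alpha^{(n)}_\zeta=\Ad(v(\zeta))$ via Proposition \ref{Lemma actions on Mn}, and contradict Lemma \ref{unitaries in Mn at least 2 away}. The only difference is that you spell out, via the polar-decomposition correction, the ``standard approximation argument'' that the paper invokes without detail, which is a fine (and harmless) elaboration.
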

\begin{proof} Assume that $\alpha$ has the Rokhlin property. Take $\ep =2$ and $F=\emptyset$. A standard approximation argument
shows that there exist $n\in\N$ and $u\in\U_{k_n}(\C)$ such that
$$\left\|\alpha^{(n)}_\zeta(u)-\zeta u\right\|<2$$
for all $\zeta\in \T$. By \autoref{Lemma actions on Mn}, there is a continuous map $v\colon \T\to\U_{k_n}(\C)$ such that
$\alpha^{(n)}_\zeta=\Ad(v(\zeta))$ for all $\zeta\in\T$. Now, \autoref{unitaries in Mn at least 2 away} implies that there exist $\zeta_0\in \T$ such that
$\|v(\zeta_0)uv(\zeta_0)^*-\zeta_0 u\|\geq 2$. Therefore, $2> \left\|\alpha^{(n)}_{\zeta_0}(u)-\zeta_0 u\right\|\geq 2$, which is a contradiction.
Thus, $\alpha$ does not have the Rokhlin property. \end{proof}

\subsection{Genericity on $\mathcal{O}_2$-absorbing algebras}

In this subsection, we specialize to circle actions with the \Rp\ on $\Ot$-absorbing \ca s. This class of \ca s is special in our context. Indeed, circle actions with the \Rp\ are generic on separable, unital, $\Ot$-absorbing \ca s; see \autoref{rokhlin action are a Gdelta set}.
This fact should be contrasted with \autoref{no direct limit actions with the Rp on AF-algebras}.
Additionally, since $\mathcal{O}_2$ absorbs every UHF-algebra, \autoref{characterize restriction has Rp} applies to
algebras that absorb $\Ot$, so the restriction of every circle action with the \Rp\ to a finite subgroup,
again has the \Rp.

This subsection is devoted to proving the first of the two results mentioned above.
Throughout, $A$ will be a separable, unital \ca.

\begin{df} Given an enumeration $S=\{a_1,a_2,\ldots\}$ of a countable dense subset of the unit ball of $A$, define metrics on $\Aut(A)$ by
$$\rho_S^{(0)}(\alpha,\beta)=\sum\limits_{k=1}^\I \frac{\|\alpha(a_k)-\beta(a_k)\|}{2^k} \ \ \ \mbox{ and } \ \ \ \rho_S(\alpha,\beta)=\rho_S^{(0)}(\alpha,\beta)+\rho_S^{(0)}(\alpha^{-1},\beta^{-1}).$$\end{df}

Denote by $\mbox{Act}_\T(A)$ the set of all circle actions on $A$. For any enumeration $S=\{a_1,a_2,\ldots\}$ as above, define a metric on $\mbox{Act}_\T(A)$ by
$$\rho_{\T,S}(\alpha,\beta)=\max_{\zeta\in\T} \rho_S(\alpha_\zeta,\beta_\zeta).$$

\begin{lma}\label{space of T-actions is complete} For any $S$ as above, the function $\rho_{\T,S}$ is a complete metric on
$\mbox{Act}_\T(A)$.\end{lma}
\begin{proof} Let $\left(\alpha^{(n)}\right)_{n\in \N}$ be a Cauchy sequence in $\mbox{Act}_\T(A)$, that is, for every $\varepsilon>0$
there is $n_0\in \N$ such that for every $n,m\geq n_0$, we have $\rho_{\T,S}\left(\alpha^{(n)},\alpha^{(m)}\right)<\varepsilon$. We
want to show that there is $\alpha\in\mbox{Act}_\T(A)$ such that $\lim\limits_{n\to\I}\rho_{\T,S}\left(\alpha,\alpha^{(n)}\right)= 0$.\\
\indent Given $\zeta\in\T$, we have $\rho_S\left(\alpha_\zeta^{(n)},\alpha_\zeta^{(m)}\right)\leq \rho_{\T,S}\left(\alpha^{(n)},\alpha^{(m)}\right)$,
and hence $\left(\alpha_\zeta^{(n)}\right)_{n\in \N}$ is Cauchy in $\Aut(A)$. By Lemma 3.2 in \cite{Phi_generic},
the pointwise norm limit of the sequence $\left(\alpha^{(n)}_\zeta\right)_{n\in \N}$ exists, and we denote it by $\alpha_\zeta$. It
also follows from Lemma 3.2 in \cite{Phi_generic} that $\alpha_\zeta$ is an automorphism of $A$, with inverse
$\alpha_{\zeta^{-1}}$. Moreover, the map $\alpha\colon\T\to\Aut(A)$ given by $\zeta\mapsto \alpha_\zeta$ is a group homomorphism, since
it is the pointwise norm limit of group homomorphisms. It remains to check that it is continuous, and this follows from an $\frac{\ep}{3}$
argument from $\lim\limits_{n\to\I}\left\|\alpha_\zeta^{(n)}(a_k)-\alpha_\zeta(a_k)\right\|= 0$ for all $k\in \N$, and the fact that
$\alpha^{(n)}\colon\T\to\Aut(A)$ is continuous for all $n\in\N$.\end{proof}

\begin{nota} Given a finite subset $F\subseteq A$ and $\varepsilon>0$, let $W_\T(F,\varepsilon)$ be the set of all actions $\alpha\in\mbox{Act}_\T(A)$ such that there exists $u\in\U(A)$ with $\|ua-au\|<\varepsilon$ for all $a\in F$ and $\|\alpha_\zeta(u)-\zeta u\|<\varepsilon$ for all $\zeta\in\T$.\end{nota}

It is easy to check that an action $\alpha\in \mbox{Act}_\T(A)$ has the \Rp\ \ifo $\alpha\in W_\T(F,\varepsilon)$ for all finite subsets
$F\subseteq A$ and all positive numbers $\varepsilon>0$.

\begin{lma}\label{lma} Let $S$ be a countable dense subset of the unit ball of $A$, and let $\mathcal{F}$ be the set of all finite subsets of $S$. Then $\alpha\in\mbox{Act}_\T(A)$ has the \Rp\ \ifo
$$\alpha\in \bigcap_{F\in \mathcal{F}}\bigcap_{n=1}^\I W_\T\left(F,\frac{1}{n}\right).$$\end{lma}
\begin{proof} One just needs to approximate any finite set by scalar multiples of elements in a finite subset of $S$. We omit the details.\end{proof}

Using the notation of the lemma above, observe that the family $\mathcal{F}$ is countable.

\begin{prop}\label{W(F,epsilon) is dense} Let $A$ and $\D$ be unital, separable \ca s, such that there is an action
$\gamma\colon\T\to\Aut(\D)$ with the \Rp. Suppose that there exists an isomorphism $\varphi\colon A\otimes \D\to A$
such that $a\mapsto \varphi(a\otimes 1_\D)$ is approximately unitarily equivalent to $\id_A$. Then for every finite subset
$F\subseteq A$ and every $\varepsilon>0$, the set $W_\T(F,\varepsilon)$ is open and dense.\end{prop}
\begin{proof} We first check that $W_\T(F,\varepsilon)$ is open. Fix an enumeration $S=\{a_1,a_2,\ldots\}$ of a countable
dense subset of the unit ball of $A$. Let $\alpha\in W_\T(F,\varepsilon)$, and choose $u\in\U(A)$ such that $\|ua-au\|<\varepsilon$ for all
$a\in F$ and $\|\alpha_\zeta(u)-\zeta u\|<\varepsilon$ for all $\zeta\in\T$. Set
$$\varepsilon_0=\max_{\zeta\in\T}\|\alpha_\zeta(u)-\zeta u\|,$$
so that $\varepsilon_1=\varepsilon-\varepsilon_0>0$. Choose $k\in \N$ such that $\|a_k-u\|<\frac{\varepsilon_1}{3}$. Now, we claim that if
$\alpha'\in \mbox{Act}_\T(A)$ satisfies $\rho_{\T,S}(\alpha',\alpha)<\frac{\varepsilon_1}{2^{k}3}$, then $\alpha'\in W_\T(F,\varepsilon)$. Indeed,
\begin{align*}\|\alpha'_\zeta(u)-\zeta u\|&\leq \|\alpha'_\zeta(u)-\alpha_\zeta(u)\|+\|\alpha'_\zeta(u)-\zeta u\|\\
&\leq \frac{2\ep_1}{3} +\|\alpha_\zeta'(a_k)-\alpha_\zeta(a_k)\|+ \varepsilon_0\\
&\leq \frac{2\ep_1}{3} + 2^k\rho_{\T,S}(\alpha,\alpha')+\varepsilon_0\\
&=\varepsilon_1+\varepsilon_0=\varepsilon.\end{align*}
This proves that $W_\T(F,\varepsilon)$ is open.\\
\indent We will now show that $W_\T(F,\varepsilon)$ is dense in $\mbox{Act}_\T(A)$. Let $\alpha$ be an arbitrary action in $\mbox{Act}_\T(A)$,
let $T\subseteq A$ be a finite set, and let $\delta>0$. We want to find $\beta\in\mbox{Act}_\T(A)$ such that $\beta\in W_\T(F,\varepsilon)$
and $\rho_{\T,S}(\alpha,\beta)<\delta$. \\
\indent Choose $\delta'>0$ such that $\delta'<\min\{\delta,\varepsilon\}$. Since $\alpha$ is continuous, there is $\delta_0>0$ such that
whenever $\zeta,\zeta'\in \T$ and $|\zeta-\zeta'|<\delta_0$, then $\|\alpha_{\zeta}(a)-\alpha_{\zeta'}(a)\|<\frac{\dt'}{4}$ for all $a\in T$.
Choose $m\in\N$ and $\zeta_1,\ldots,\zeta_m\in\T $ such that for every $\zeta\in\T$ there is $j\in\N$ with $1\leq j\leq m$ and such that
$|\zeta-\zeta_j|<\delta_0$. Choose $w\in\U(A)$ such that $\|w\varphi(1\otimes a)w^*-a\|<\frac{\dt'}{2}$ for all
$a\in T\cup \bigcup\limits_{j=1}^m\alpha_{\zeta_j}(T)$. Set $\psi=\Ad(w)\circ\varphi$ and for $\zeta\in\T$, define an action
$\beta\in\mbox{Act}_\T(A)$ by
$$\beta_\zeta=\psi\circ(\gamma_\zeta\otimes\alpha_\zeta)\circ\psi^{-1}.$$
We claim that $\beta\in W_\T(F,\varepsilon)$. Choose $w'\in A\otimes\D$ of the form $w'=\sum\limits_{\ell=1}^rx_\ell\otimes d_\ell$ for
some $d_1,\ldots,d_r\in \D$ and some $x_1,\ldots,x_r\in A$, such that $\|w-w'\|<\frac{\dt}{3}$. Since $\gamma$ has the \Rp, use
\autoref{can replace estimate by equality} to choose $u\in\U(\D)$ such that $\gamma_\zeta(u)=\zeta u$ for all $\zeta\in\T$ and $\|ud_\ell-d_\ell u\|<\frac{\ep}{4}$ for all $\ell=1,\ldots,r$. Then
$$\|(1_A\otimes u)w'-w'(1_A\otimes u)\|<\frac{\dt}{3}$$
and hence $\|(1_A\otimes u)w-w(1_A\otimes u)\|<\delta$. Set $v=\varphi(1_A\otimes u)$. Then
\begin{align*}\|\beta_\zeta(v)-\zt v\| &=\|w\varphi\left((\alpha_\zeta\otimes\gamma_\zeta)(\varphi^{-1}(w^*\varphi(1_A\otimes u)w))\right)w^*-\zt\varphi(1_A\otimes u)\|\\
& \leq \|w\varphi\left((\alpha_\zeta\otimes\gamma_\zeta)(\varphi^{-1}(w^*\varphi(1_A\otimes u)w))\right)w^*-w\varphi\left((\alpha_\zeta\otimes\gamma_\zeta)(1_A\otimes u)\right)w^*\|\\
& \ \ \ +\|w\varphi\left((\alpha_\zeta\otimes\gamma_\zeta)(1_A\otimes u)\right)w^*-\zt\varphi(1_A\otimes u)\|\\
&<\frac{\dt'}{2} + \|w\varphi\left(\zeta 1_A\otimes u\right)w^*-\zt\varphi(1_A\otimes u)\|\\
&<\frac{\dt'}{2}+\frac{\dt'}{2}=\delta'<\varepsilon\end{align*}
for all $\zeta\in\T$, and thus $\|\beta_\zeta(u)-\zeta v\|<\varepsilon$ for all $\zeta\in\T$. On the other hand, given $a\in F$, we have
\begin{align*}\|va-av\|& = \|\varphi(1_A\otimes u)a-a\varphi(1_A\otimes u)\| \\
&\leq \|\varphi(1_A\otimes u)a-\varphi(1_A\otimes u)w\varphi(a\otimes 1_\D)w^*\|\\
& \ \ \ +\|\varphi(1_A\otimes u)w\varphi(a\otimes 1_\D)w^*-w\varphi(a\otimes 1_\D)w^*\varphi(1_A\otimes u)\|\\
& \ \ \ +\|w\varphi(a\otimes 1_\D)w^*\varphi(1_A\otimes u)-a\varphi(1_A\otimes u)\|\\
&<\frac{\dt'}{2}+0+\frac{\dt'}{2}=\delta'<\varepsilon\end{align*}
because $a\otimes 1_\D$ and $1_A\otimes u$ commute. This proves the claim.\\
\indent It remains to prove that $\|\beta_\zeta(a)-\alpha_\zeta(a)\|<\delta$ for all $a\in T$ and all $\zt\in \T$. For fixed $\zeta\in\T$ and $a\in T$, we have
\begin{align*}\|\beta_\zeta(a)-\alpha_\zeta(a)\|&=\|w\varphi\left((\alpha_\zeta\otimes\gamma_\zeta)(\varphi^{-1}(w^*aw))\right)-\alpha_\zeta(a)\| \\
&\leq \|w\varphi\left((\alpha_\zeta\otimes\gamma_\zeta)(\varphi^{-1}(w^*aw))\right)-w\varphi\left((\alpha_\zeta\otimes\gamma_\zeta)(a\otimes 1_\D)\right)\|\\
& \ \ \ + \|w\varphi\left((\alpha_\zeta\otimes\gamma_\zeta)(a\otimes 1_\D)\right)-\alpha_\zeta(a)\|\\
&< \frac{\dt'}{2}+\|w\varphi(\alpha_\zeta(a)\otimes 1_\D)w^*-\alpha_\zeta(a)\|\\
&\leq \frac{\dt'}{2}+ \|w\varphi(\alpha_\zeta(a)\otimes 1_\D)w^*-w\varphi(\alpha_{\zeta_j}(a)\otimes 1_\D)w^*\|\\
&\ \ \ +\|w\varphi(\alpha_{\zeta_j}(a)\otimes 1_\D)w^*-\alpha_{\zeta_j}(a)\|+\|\alpha_{\zeta_j}(a)-\alpha_{\zeta}(a)\|\\
&<\frac{\dt'}{2}+\frac{\dt'}{4}+\frac{\dt'}{4}=\delta'<\delta.\end{align*}
This finishes the proof.\end{proof}

\begin{thm}\label{Rokhlin are generic on D-absorbing algs} Let $A$ and $\D$ be unital, separable \ca s, such that there is an action
$\gamma\colon\T\to\Aut(\D)$ with the \Rp. Suppose that there exists an isomorphism $\varphi\colon A\otimes \D\to A$ such that
$a\mapsto \varphi(a\otimes 1_\D)$ is approximately unitarily equivalent to $\id_A$. Then the set of all circle actions with the \Rp\ on $A$ is a dense
$G_\delta$-set in $\mbox{Act}_\T(A)$. \end{thm}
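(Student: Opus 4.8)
The plan is to obtain the statement as a direct application of the Baire category theorem, with all of the substantive work already done in the three preceding lemmas. First I would fix an enumeration $S=\{a_1,a_2,\ldots\}$ of a countable dense subset of $A$ and invoke Lemma \ref{space of T-actions is complete}, which says that $(\mbox{Act}_\T(A),\rho_{\T,S})$ is a complete metric space; in particular it is a Baire space, so a countable intersection of open dense subsets is dense. Before proceeding one should record the routine observation that the topology induced by $\rho_{\T,S}$ does not depend on the choice of $S$ — convergence in $\rho_{\T,S}$ is precisely uniform-in-$\zeta$ pointwise-norm convergence of $\alpha_\zeta$ and of $\alpha_\zeta^{-1}$ on the dense set $S$, hence on all of $A$ — so that the phrase ``$G_\delta$ in $\mbox{Act}_\T(A)$'' is unambiguous.

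Next I would let $\mathcal{F}$ be the collection of all finite subsets of $S$, which is countable, and appeal to Lemma \ref{lma}: an action $\alpha\in\mbox{Act}_\T(A)$ has the Rokhlin property if and only if
$$\alpha\in\bigcap_{F\in\mathcal{F}}\bigcap_{n=1}^{\I} W_\T\!\left(F,\frac{1}{n}\right).$$
Since $\mathcal{F}\times\N$ is countable, this exhibits the set of Rokhlin actions as a countable intersection of the sets $W_\T(F,\frac1n)$. The hypotheses of the present theorem are exactly those of Lemma \ref{W(F,epsilon) is dense}, so that lemma applies and shows that each $W_\T(F,\frac1n)$ is open and dense in $\mbox{Act}_\T(A)$. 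By the Baire category theorem, the intersection above is therefore a dense $G_\delta$ subset of $\mbox{Act}_\T(A)$, which is the desired conclusion.

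I do not expect any genuine obstacle here: the only things to be careful about are bookkeeping matters — checking that the index set $\mathcal{F}\times\N$ is countable and that the $G_\delta$ property is independent of the auxiliary metric — since all the real content (completeness of $\mbox{Act}_\T(A)$, and the openness and, above all, the density of $W_\T(F,\varepsilon)$, where the $\Ot$-type absorption hypothesis on $A$ is used) has already been isolated in Lemmas \ref{space of T-actions is complete} and \ref{W(F,epsilon) is dense}.
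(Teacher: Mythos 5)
Your proposal is correct and follows exactly the paper's argument: Lemma \ref{lma} expresses the Rokhlin actions as the countable intersection $\bigcap_{F\in\mathcal{F}}\bigcap_{n\in\N} W_\T(F,\frac{1}{n})$, Lemma \ref{W(F,epsilon) is dense} gives openness and density of each $W_\T(F,\frac{1}{n})$, and completeness from Lemma \ref{space of T-actions is complete} lets the Baire category theorem finish. The extra remark on independence of the metric from the enumeration $S$ is a harmless addition not spelled out in the paper.
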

\begin{proof} By \autoref{lma}, the set of all circle actions on $A$ that have the \Rp\ is precisely the countable intersection
$$\bigcap_{F\in \mathcal{F}}\bigcap_{n\in\N} W_\T\left(F.\frac{1}{n}\right).$$
By \autoref{W(F,epsilon) is dense}, each $W_\T\left(F,\frac{1}{n}\right)$ is open and dense in $\mbox{Act}_\T(A)$, which is a complete metric space
by \autoref{space of T-actions is complete}, so the result follows from the Baire Category Theorem.\end{proof}

Recall that a unital, separable \ca\ $\D$ is said to be strongly self-absorbing if it is infinite-dimensional and the map $\D\to \D\otimes \D$ given by $d\mapsto d\otimes 1$ is approximately unitarily equivalent to an isomorphism. (Strongly self-absorbing \ca s are always nuclear, so there is no ambiguity when talking about tensor products.) The only known examples are the Jiang-Su algebra $\mathcal{Z}$, the Cuntz algebras $\Ot$ and $\mathcal{O}_\I$, UHF-algebras of infinite type, and tensor products of $\OI$ by such UHF-algebras. See \cite{TomWin_SSA} for more details and results on strongly self-absorbing \ca s.

\begin{rem}\label{remark} In the context of \autoref{Rokhlin are generic on D-absorbing algs}, suppose additionally that $\D$ is a unital,
separable strongly self-absorbing \ca. Then, according to Theorem 7.2.2 in \cite{Ror_BookClassif}, \tfae:
\be\item There exists an isomorphism $\varphi\colon A\otimes\D\to A$ such that $a\mapsto \varphi(a\otimes 1_\D)$ is approximately unitarily equivalent to $\id_A$;
\item There exists some isomorphism $\psi\colon A\otimes \D\to A$.\ee\end{rem}

\begin{thm}\label{rokhlin action are a Gdelta set} Let $A$ be a separable \uca\ such that $A\otimes\Ot\cong A$. Then the set of all circle actions on $A$ with the \Rp\ is a dense $G_\delta$-set in $\mbox{Act}_\T(A)$.\end{thm}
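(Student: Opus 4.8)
The plan is to deduce this theorem as an immediate corollary of Theorem \ref{Rokhlin are generic on D-absorbing algs}, using Remark \ref{remark} to discharge the hypothesis about the isomorphism being approximately unitarily equivalent to the identity. First I would recall that $\Ot$ is a unital, separable, strongly self-absorbing $C^*$-algebra; this is a standard fact (see \cite{toms winter ssa}), so it plays the role of $\D$ in the earlier theorem. Second, I would invoke Example \ref{eg of Rp on Ot} to produce a circle action $\gamma\colon\T\to\Aut(\Ot)$ with the \Rp, so that the hypothesis ``there is an action $\gamma\colon\T\to\Aut(\D)$ with the \Rp'' is satisfied with $\D=\Ot$.

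Next I would handle the remaining hypothesis of Theorem \ref{Rokhlin are generic on D-absorbing algs}, namely the existence of an isomorphism $\varphi\colon A\otimes\Ot\to A$ with $a\mapsto\varphi(a\otimes 1_{\Ot})$ approximately unitarily equivalent to $\id_A$. By hypothesis we only know $A\otimes\Ot\cong A$, i.e.\ condition (2) of Remark \ref{remark}. Since $\Ot$ is strongly self-absorbing and $A$ is separable and unital, Remark \ref{remark} (which is Theorem 7.2.2 in \cite{Rordam classification}) gives that (2) implies (1), i.e.\ an isomorphism $\varphi$ of the required form exists. With both hypotheses of Theorem \ref{Rokhlin are generic on D-absorbing algs} now verified, that theorem immediately yields that the set of circle actions with the \Rp\ on $A$ is a dense $G_\delta$-set in $\mbox{Act}_\T(A)$, which is exactly the claim.

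There is essentially no obstacle here: the real work is in the proof of Theorem \ref{Rokhlin are generic on D-absorbing algs} (and in Lemmas \ref{space of T-actions is complete}, \ref{lma}, \ref{W(F,epsilon) is dense}), together with the construction in Example \ref{eg of Rp on Ot}. The only thing to be careful about is that all hypotheses of the cited results genuinely apply: $A$ is separable and unital (given), $\Ot$ is separable, unital and strongly self-absorbing, and the action $\gamma$ from Example \ref{eg of Rp on Ot} does have the \Rp. Once these are checked, the proof is a one-line appeal to the previous theorem, so I would write it out in just two or three sentences.

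\begin{proof} Recall that $\Ot$ is a unital, separable, strongly self-absorbing \ca\ (see \cite{toms winter ssa}), and that by Example \ref{eg of Rp on Ot} there is an action $\gamma\colon\T\to\Aut(\Ot)$ with the \Rp. Since $A$ is separable and unital and $A\otimes\Ot\cong A$, Remark \ref{remark} provides an isomorphism $\varphi\colon A\otimes\Ot\to A$ such that $a\mapsto\varphi(a\otimes 1_{\Ot})$ is approximately unitarily equivalent to $\id_A$. Applying Theorem \ref{Rokhlin are generic on D-absorbing algs} with $\D=\Ot$, we conclude that the set of all circle actions on $A$ with the \Rp\ is a dense $G_\delta$-set in $\mbox{Act}_\T(A)$. \end{proof}
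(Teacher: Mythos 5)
Your proposal is correct and follows exactly the route the paper takes: invoke Example \ref{eg of Rp on Ot} for a Rokhlin action on $\Ot$, use Remark \ref{remark} (Theorem 7.2.2 in \cite{Rordam classification}) to upgrade $A\otimes\Ot\cong A$ to an isomorphism whose restriction $a\mapsto\varphi(a\otimes 1_{\Ot})$ is approximately unitarily equivalent to $\id_A$, and then apply Theorem \ref{Rokhlin are generic on D-absorbing algs}. No gaps; this matches the paper's own two-line proof.
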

\begin{proof} By \autoref{eg of Rp on Ot}, there is an action $\gamma\colon\T\to\Aut(\Ot)$ with the \Rp. Since $A$ absorbs $\Ot$ tensorially,
the hypotheses of \autoref{Rokhlin are generic on D-absorbing algs} are met by \autoref{remark}, and the result follows. \end{proof}

It is a consequence of the theorem above that the \Rp\ is generic for circle actions on $\Ot$. Nevertheless, we do not know of any such action for which it is possible to describe what the images of the canonical generators of $\Ot$ are. In particular, we do not have a model action on $\Ot$.

\section{Restrictions to finite cyclic groups}
This section is devoted to proving that for $n\in\N$,
the restriction of a circle action with the \Rp\ on a $M_{n^\I}$-absorbing \ca\ to the finite cyclic
group $\Z_n$ again has the \Rp. See \autoref{characterize restriction has Rp}. This phenomenon cannot be expected to hold in full generality since the
\Rp\ for a circle action does not guarantee the existence of any non-trivial projections. Even more, there are serious $K$-theoretical
obstructions to the \Rp\ for finite groups. See \autoref{eg restriction on S1} and \autoref{eg restriction on pi ca} below.\\
\indent On the other hand, this result will be used in subsequent work to classify
circle actions with the Rokhlin property on unital \ca s that absorb some UHF-algebra of infinite type.

We give a rough outline of what our strategy will be. We will first focus on cyclic group actions which are restrictions of circle
actions with the \Rp. These have what we call the ``unitary Rokhlin property'', which is a weakening of the \Rp\ of
\autoref{RP}, that asks
for a unitary instead
of projections; see \autoref{df: uRp}. Dual actions of actions with the unitary Rokhlin property can be completely characterized, and we
do so in \autoref{strongly approx inner and circle Rp}. The relevant notion is that of ``strong approximate innerness''; see
\autoref{df: sai}.
We will later show in (the proof of) \autoref{characterize restriction has Rp} that, under a number of assumptions, every strongly approximately inner
action of $\Z_n$ is approximately representable, which is the notion dual to the \Rp, as was shown by Izumi in \cite{Izu_RpI}.
The conclusion is then that the original restriction, which a priori had the unitary Rokhlin property, actually has the \Rp.

The following is Definition 3.6 in \cite{Izu_RpI}.

\begin{df}\label{df: sai} Let $B$ be a \uca, and let $\beta$ be an action of a finite abelian group $G$ on $B$.
\be\item We say that $\beta$ is \emph{strongly approximately inner} if there exist unitaries $u(g)\in (B^\beta)^\I$, for  $g\in G$, such that
$$\beta_g(b)=u(g)bu(g)^*$$
for $b\in B$ and $g\in G$.
\item We say that $\beta$ is \emph{approximately representable} if $\beta$ is strongly approximately inner and the unitaries
$u(g)$ for $g\in G$ as in (1) above, can be chosen to form a representation of $G\in (B^\beta)^\I$.\ee\end{df}

\begin{nota}\label{generating automorphism} Let $B$ be a \ca, let $G$ be a cyclic group (that is, either $\Z$ or $\Z_n$ for some $n\in \N$), and let 
$\beta\colon G\to\Aut(B)$ be action of $G$ on $B$. We will usually make a slight abuse of notation and also denote by $\beta$ the generating automorphism $\beta_1$.\end{nota}

If $G$ is a finite cyclic group, we have the following characterization of strong approximate innerness in terms of elements in $B$, rather than in $(B^\beta)^\I$.

\begin{lma} Let $B$ be a separable, \uca, let $n\in\N$, and let $\beta$ be an action of $\Z_n$ on $B$. Then $\beta$ is strongly approximately inner \ifo for every finite subset $F\subseteq B$ and every $\varepsilon>0$, there is a unitary $w\in \U(B)$ such that $\|\beta(w)-w\|<\varepsilon$ and $\|\beta(b)-wbw^*\|<\varepsilon$ for all $b\in F$. Moreover, $\beta$ is approximately representable \ifo the unitary $w$ above can be chosen so that $w^n=1$.\end{lma}
\begin{proof} Assume that $\beta$ is strongly approximately inner. Use a standard perturbation argument to choose a sequence
$(u_m)_{m\in\N}$ of unitaries in $B^\beta$ that represents $u(1)\in (B^\beta)^\I$. Then $\lim\limits_{m\to\I}\|\beta(u_m)-u_m\|= 0$,
and for $b\in B$, we have $\lim\limits_{m\to\I}\|\beta(b)-u_mbu_m^*\|= 0$. \\
\indent Given a finite set $F\subseteq B$ and $\varepsilon>0$, choose
$M\in\N$ such that $\|\beta(u_M)-u_M\|<\varepsilon$ and $\|\beta(b)-u_Mbu_M^*\|<\varepsilon$ for all $b\in F$, and set $w=u_M$. \\
\indent Conversely, let $(F_m)_{m\in\N}$ be an increasing sequence of finite subsets of $B$ satisfying $\overline{\bigcup\limits_{m\in\N}F_m}=B$. 
For $m\in \N$, set $\varepsilon_m=\frac{1}{m}$ and let $w_m$ be as in the statement for $\ep_m$ and $F_m$.
Then
$$u=\overline{(w_m)}_{m\in\N} \in (B^\beta)^\I$$
satisfies $\beta(b)=ubu^*$ for all $b\in F_m$, and hence $\beta$ is strongly approximately inner.\\
\indent For the second statement, observe that a unitary of order $n\in (B^\beta)^\I$ can be lifted to a sequence unitaries of order
$n$ in $B^\beta$. First, observe that a unitary in $(B^\beta)^\I$ can always be lifted to a sequence of unitaries in $B$. Second, a standard functional 
calculus argument shows that if $v$ is a unitary in $B^\beta$ such that $\|v^n-1\|$ is small,
then $v$ is close to a unitary $\widetilde{v}\in B^\beta$ such that $\widetilde{v}^n=1$. We omit the details. \end{proof}

The following is Lemma 3.8 in \cite{Izu_RpI}.

\begin{lma} \label{approx repres and Rp} Let $B$ be a separable \uca, and let $\beta$ be an action of a finite abelian group $G$ on $B$.
\be\item The action $\beta$ has the \Rp\ \ifo the dual action $\widehat{\beta}$ is approximately representable.
\item The action $\beta$ is approximately representable \ifo the dual action $\widehat{\beta}$ has the \Rp.\ee\end{lma}

The lemma above should be regarded as the assertion that for finite abelian group actions, the \Rp\ and approximate representability are dual notions.
It is therefore natural to ask what condition on $\beta$ is equivalent to its dual action being strongly approximately inner, rather than
approximately representable. Such a condition will necessarily be weaker than the \Rp. We define the relevant property below.

\begin{df}\label{df: uRp}
Let $B$ be a \uca, let $n\in\N$ and let $\beta\colon\Z_n\to\Aut(B)$ be an action. We say that $\beta$ has the \emph{unitary \Rp} if for every
$\varepsilon>0$ and for every finite subset $F\subseteq B$, there exists $u\in\U(B)$ such that $\|ub-bu\|<\varepsilon$ for all $b\in F$ and
$\left\|\beta_k(u)-e^{2\pi i k/n}u\right\|<\varepsilon$ for all $k\in\Z_n$.\end{df}

Let $A$ be a \uca. Given a continuous action $\alpha\colon\T\to\Aut(A)$, and $n\in\N$, we denote by $\alpha|_n$ the restriction $\alpha|_{\Z_n}\colon\Z_n\to\Aut(A)$ of $\alpha$ to
$$\{1,e^{2\pi i/n},\ldots,e^{2\pi i (n-1)/n}\}\cong \Z_n.$$

Recall that if $v$ is the canonical unitary in $A\rtimes_{\alpha|_n}\Z_n$ implementing $\alpha|_n$, then the dual action
$$\widehat{\alpha|_n}\colon\Z_n\cong\widehat{\Z_n}\to \Aut(A\rtimes_{\alpha|_n}\Z_n)$$
of $\alpha|_n$ is given by $\left(\widehat{\alpha|_n}\right)_k(a)=a$ for all $a\in A$ and $\left(\widehat{\alpha|_n}\right)_k(v)=e^{2\pi i k/n}v$ for all $k\in\Z_n$.

The following easy lemmas provide us with many examples of cyclic group actions with the unitary \Rp.

\begin{lma}\label{restriction has uRp} If $\alpha\colon\T\to\Aut(A)$ has the \Rp, then $\alpha|_n$ has the unitary \Rp\ for all $n\in\N$.\end{lma}
\begin{proof} Given $\ep >0$ and a finite subset $F\subseteq A$, choose a unitary $u\in \U(A)$ such that $\|ua-au\|<\ep $ for all
$a\in F$ and $\|\alpha_\zeta(u)-\zeta u\|<\varepsilon$ for all $\zeta\in\T$. If $n\in\N$, then
$$\left\|(\alpha|_n)_k(u)-e^{2\pi i k/n}u\right\|=\left\|\alpha_{e^{2\pi i k/n}}(u)-e^{2\pi i k/n}u\right\|<\varepsilon$$
for all $k\in\Z_n$, as desired.\end{proof}

\begin{lma}\label{Rp implies uRp} If $\beta\colon\Z_n\to\Aut(B)$ has the \Rp, then it has the unitary \Rp.\end{lma}
\begin{proof} Given $\varepsilon>0$ and a finite subset $F\subseteq B$, choose projections $e_0,\ldots,e_{n-1}$ as in the definition
of the Rokhlin property for the tolerance
$\frac{\varepsilon}{n}$ and the finite set $F$, and set $u=\sum\limits_{j=0}^{n-1}e^{-2\pi i j/n}e_j$. Then $u$ is a unitary in $B$. Moreover, $\|ub-bu\|<\varepsilon$ for all $b\in F$ and
$$\left\|\beta_k(u)-e^{2\pi i k/n}u\right\|=\left\|\sum\limits_{j=0}^{n-1}e^{2\pi ij/n}\beta_k(e_j)-e^{2\pi i k/n}\sum\limits_{j=0}^{n-1}e^{-2\pi i j/n}e_j\right\|<\varepsilon$$
since $\|\beta_k(e_j)-e_{j+k}\|<\frac{\varepsilon}{n}$ for all $j,k\in\Z_n$, and the projections $e_0,\ldots,e_{n-1}$ are pairwise orthogonal.\end{proof}

The converse of the preceding lemma is not in general true, since the unitary Rokhlin property does not ensure the existence of any non-trivial projections on the algebra. We present two examples of how this can fail.

\begin{eg}\label{eg restriction on S1} Consider the action of left translation of $\T$ on $C(\T )$. It has the Rokhlin property, so its restriction
to any $\Z_n\subseteq \T$ has the unitary \Rp. However, no non-trivial finite group action on $C(\T )$ can have the \Rp\ since $C(\T )$ has no non-trivial
projections. \end{eg}

Besides merely the lack of projections, there are less obvious $K$-theoretic obstructions for the restrictions of an action of the circle with the \Rp\ to
have the \Rp. See \autoref{eg restriction on pi ca}.\\
\indent We need a lemma first.

\begin{prop}\label{trivial action on K-thy} Let $G$ be a connected metric group, let $A$ be a \uca, and let $\alpha\colon G\to\Aut(A)$ be a
continuous action (not necessarily with the Rokhlin property). Then $K_\ast(\alpha_g)=\id_{K_\ast(A)}$ for all $g\in G$.\end{prop}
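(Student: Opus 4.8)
The plan is to reduce the statement to the well-known fact that homotopic $\ast$-homomorphisms (or invertible elements) induce the same map on $K$-theory, using that $G$ is path-connected. Fix $g \in G$. Since $G$ is a connected metric (hence, by local compactness considerations aside, path-connected in the relevant sense — or one replaces ``connected metric group'' by ``path-connected topological group'', which is the only case used) group, choose a continuous path $\zeta \colon [0,1] \to G$ with $\zeta(0) = e$ and $\zeta(1) = g$. Then $t \mapsto \alpha_{\zeta(t)}$ is a path in $\Aut(A)$ from $\id_A$ to $\alpha_g$, and crucially it is \emph{pointwise norm continuous}: for each $a \in A$ the map $t \mapsto \alpha_{\zeta(t)}(a)$ is continuous because $\alpha$ is a continuous action and $\zeta$ is continuous. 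This is precisely a homotopy of $\ast$-homomorphisms $A \to A$ in the sense relevant for $K$-theory.

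First I would recall the standard homotopy invariance statement: if $\varphi_0, \varphi_1 \colon A \to A$ are $\ast$-homomorphisms and there is a path $(\varphi_t)_{t\in[0,1]}$ such that $t \mapsto \varphi_t(a)$ is norm-continuous for every $a \in A$, then $K_\ast(\varphi_0) = K_\ast(\varphi_1)$. For $K_0$ this is seen by tracking a projection $p \in M_k(A)$ (or, in the unital case, a class $[p]-[q]$): the path $t \mapsto \varphi_t(p)$ is a norm-continuous path of projections in $M_k(A)$, hence all its members are Murray--von Neumann (indeed homotopy) equivalent, so $[\varphi_0(p)] = [\varphi_1(p)]$ in $K_0(A)$. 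For $K_1$ one argues identically with unitaries in matrix algebras over $A$ (or over $\widetilde{A}$), using that a norm-continuous path of unitaries consists of homotopic — hence $K_1$-equivalent — unitaries. Applying this to $\varphi_t = \alpha_{\zeta(t)}$ gives $K_\ast(\alpha_e) = K_\ast(\alpha_g)$, and since $\alpha_e = \id_A$ we conclude $K_\ast(\alpha_g) = \id_{K_\ast(A)}$.

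I do not expect a serious obstacle here; the one point requiring a little care is the continuity claim — namely that composing the continuous path $\zeta$ in $G$ with the action yields a genuinely norm-continuous path in $\Aut(A)$ at the level of each fixed element. This is immediate from the definition of a continuous action (joint continuity of $G \times A \to A$, or equivalently pointwise norm-continuity of $g \mapsto \alpha_g$), so it is really a bookkeeping remark rather than a difficulty. The only mild subtlety is that ``connected metric group'' must be used in its path-connected incarnation; for the groups of interest in this paper ($\T$ and its closed subgroups, and the groups arising in the applications) path-connectedness is automatic, and in any case a topological group that is connected and locally path-connected is path-connected, which covers every instance where this proposition is invoked.
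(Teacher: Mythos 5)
Your homotopy-invariance argument is fine as far as it goes, but the reduction to a path is a genuine gap relative to the statement: the proposition assumes only that $G$ is a connected metric group, and a connected metrizable group need not be path-connected (a solenoid, i.e.\ an inverse limit of circles, is a compact connected metrizable abelian group whose path component of the identity is a proper dense subgroup). So ``choose a continuous path $\zeta\colon[0,1]\to G$ from $e$ to $g$'' is simply not available in the stated generality, and your fallback remark (connected plus locally path-connected implies path-connected) covers the paper's applications, where $G=\T$, but not the proposition as stated.

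The paper's proof avoids paths altogether: for a projection $p\in M_n(A)$, continuity of $g\mapsto (\alpha_g\otimes\id_{M_n})(p)$ gives, for each $g$, a $\delta>0$ such that $d(g,h)<\delta$ forces $\|(\alpha_g\otimes\id_{M_n})(p)-(\alpha_h\otimes\id_{M_n})(p)\|<1$; two projections at distance less than $1$ are homotopic, so the map $g\mapsto K_0(\alpha_g)([p]_0)$ is locally constant, hence constant on the connected space $G$, hence equal to its value $[p]_0$ at the identity. This uses connectedness only. (For $K_1$ the paper either repeats the argument or tensors with a UCT algebra $B$ with $K_0(B)=0$ and $K_1(B)\cong\Z$, such as $B=C_0(\R)$, and invokes the K\"unneth formula; your direct argument with unitaries over $A$ is an equally valid way to handle $K_1$.) If you replace your path-plus-homotopy step by this local-constancy argument, the rest of your write-up goes through essentially unchanged.
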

\begin{proof} We just prove it for $K_0$; the proof for $K_1$ is similar, or follows by replacing $(A,\alpha)$ with $(A\otimes B,\alpha\otimes\id_B)$,
where $B$ is any \ca\ satisfying the UCT such that $K_0(B)=0$ and $K_1(B)=\Z$, and using the K\"unneth formula. (For example, $B=C_0(\R)$ will do.)\\
\indent Denote the metric on $G$ by $d$. Let $n\in\N$ and let $p$ be a projection in $M_n(A)$. Set $\alpha^{(n)}=\alpha\otimes\id_{M_n}$, the
augmentation of $\alpha$ to $M_n(A)$. Since $\alpha^{(n)}$ is continuous, there exists $\delta>0$ such that
$\left\|\alpha^{(n)}_g(p)-\alpha^{(n)}_h(p)\right\|<1$ whenever $g$ and $h\in G$ satisfy $d(g,h)<\delta$. Since $\alpha^{(n)}_g(p)$ and $\alpha^{(n)}_h(p)$
are projections in $M_n(A)$, it follows that $\alpha^{(n)}_g(p)$ and $\alpha^{(n)}_h(p)$ are homotopic, and hence their classes in $K_0(A)$ agree,
that is, $K_0(\alpha_g)([p]_0)=K_0(\alpha_h)([p]_0)$. Denote by $e$ the unit of $G$. Since $g$ and $h$ satisfying $d(g,h)<\delta$ are arbitrary,
and since $G$ is connected, it follows that
$$K_0(\alpha_g)([p]_0)=K_0(\alpha_e)([p]_0)=[p]_0$$
for any $g\in G$. Since $p$ is an arbitrary projection in $A\otimes\K$, it follows that $K_0(\alpha_g)=\id_{K_0(A)}$ for all $g\in G$,
as desired.\end{proof}

\begin{eg}\label{eg restriction on pi ca} This is an example of a purely infinite simple separable nuclear \uca\ (in particular, with many projections), and an action of the circle on it satisfying the \Rp, such that no restriction to a finite subgroup of $\T$ has the \Rp. \\
\indent Let $\{p_n\}_{n\in\N}$ be an enumeration of the prime numbers, and for every $n\in \N$, set $q_n=p_1\cdots p_n$. Fix a countable dense subset $X=\{x_1,x_2,x_3,\ldots\}$ of $\T$ with $x_1=1$. For $x\in X$ and $f\in C(\T)$, denote by $f_x$ the function in $C(\T)$ given by $f_x(\zeta)=f(x^{-1}\zeta)$ for $\zeta\in\T$. For $n\in \N$, define a unital injective map
$$\iota_n\colon M_{q_n}(C(\T))\to M_{q_{n+1}}(C(\T))$$
by $\iota_n(f)=\diag\left(f_{1},f_{x_2},\ldots, f_{x_{p_n}}\right)$
for $f\in M_{q_n}(C(\T))$. The direct limit $A=\varinjlim (M_{q_n}(C(\T)),\iota_n)$ is a unital A$\T$-algebra, and an argument similar to the one exhibited in \autoref{eg of Rp on simple AT-algebra} shows that $A$ is simple. For $n\in\N$, let $\alpha^{(n)}\colon \T\to\Aut(M_{q_n}(C(\T)))$ be the tensor product of the trivial action on $M_{q_n}$ with the action coming from left translation on $C(\T)$. Then $\alpha^{(n)}$ has the \Rp\ by \autoref{Properties RP}. Since $\iota_n\circ\alpha^{(n)}_\zeta=\alpha^{(n+1)}_\zeta\circ\iota_n$ for all $n\in\N$ and all $\zeta\in\T$, the sequence $\left(\alpha^{(n)}\right)_{n\in\N}$ induces a direct limit action $\alpha=\varinjlim\alpha^{(n)}$ of $\T$ on $A$, which has the \Rp\ by \autoref{direct limit and Rp}.\\
\indent Now set $B=A\otimes\OI$ and define $\beta\colon\T\to\Aut(B)$ by $\beta=\alpha\otimes\id_{\OI}$. Then $B$ is a purely infinite, simple, separable, nuclear \uca, and $\beta$ has the \Rp, again by \autoref{Properties RP}. We claim that for every $m>1$, the restriction $\beta|_m\colon \Z_m\to\Aut(B)$ does not have the \Rp.\\
\indent Fix $m>1$, and assume that $\beta|_m$ has the \Rp. By \autoref{trivial action on K-thy}, we have $K_\ast(\beta_\zeta)=\id_{K_\ast(B)}$ for all $\zeta\in\T$. By Theorem 3.4 in \cite{Izu_RpII}, it follows that every element of $K_0(B)$ is divisible by $m$. On the other hand,
$$ (K_0(B),[1_B])\cong (K_0(A),[1_A])$$
\beqa\cong \left(\left\{\frac{a}{b}\colon a\in\Z, b=p_{k_1}\cdots p_{k_n}\colon n, k_1,\ldots,k_n\in\N, k_j\neq k_\ell \mbox{ for } j\neq \ell\right\},1\right),\eeqa
where not every element is divisible by $m$. This is a contradiction. \end{eg}

\indent We will nevertheless show that the restriction of an action of the circle with the \Rp\ to any finite cyclic subgroup again has the
\Rp\ if the algebra is separable and absorbs the universal UHF-algebra $\mathcal{Q}$. See \autoref{restriction of Rp has Rp} below.

\begin{lma}\label{unitary Rp and central sequence} Let $A$ be a separable \uca, let $n\in\N$ and let $\alpha\colon \Z_n\to\Aut(A)$ be an
action of $\Z_n$ on $A$. Regard $\Z_n\subseteq \T $ as the $n$-th roots of unitry, and let $\gamma\colon\Z_n\to \Aut(C(\T ))$ be the
restriction of the action by left translation of $\T $ on $C(\T )$. Let $\alpha_\I\colon \Z_n\to\Aut(A_\I\cap A')$ be the action on
$A_\I\cap A'$ induced by $\alpha$. Then $\alpha$ has the unitary \Rp\ \ifo there exists a unital equivariant homomorphism
\[\varphi\colon (C(\T ),\gamma) \to (A_\I\cap A',\alpha_\I).\]\end{lma}
\begin{proof} Choose an increasing sequence $(F_m)_{m\in\N}$ of finite subsets of $A$ such that $\overline{\bigcup\limits_{m\in\N}F_m}=A$. For each $m\in\N$, there exists a unitary $u_m\in A$ such that
$$\|u_ma-au_m\|<\frac{1}{m} \ \ \mbox{ and } \ \ \left\|\alpha_{j}(u_m)-e^{2\pi i j/n}u_m\right\|<\frac{1}{m}$$
for every $a\in F_m$ and for every $j\in \Z_n$. Denote by $u=\overline{(u_m)}_{m\in\N}$ the image of the sequence of unitaries $(u_m)_{m\in\N}$
in $A_\I$. Then $u$ belongs to the relative commutant of $A\in A_\I$. Consider the unital map $\varphi\colon C(\T )\to A_\I\cap A'$ given by
$\varphi(f)=f(u)$ for $f\in C(\T )$. One checks that
$$\alpha_{j}(\varphi(f))=\varphi(\gamma_{e^{2\pi ij/n}}(f))$$
for all $j\in\Z_n$ and all $f\in C(\T )$, so $\varphi$ is equivariant.\\
\indent Conversely, assume that there is an equivariant unital homomorphism
$$\varphi\colon C(\T ) \to A_\I\cap A'.$$
Let $z\in C(\T )$ be the unitary given by $z(\zt)=\zt$ for all $\zt\in \T $, and let $v=\varphi(z)$. By semiprojectivity of $C(\T )$, we
can choose a representing sequence $(v_m)_{m\in\N}\in \ell^\I(\N,A)$ consisting of unitaries. It follows that
$$\lim\limits_{m\to\I}\left\|\alpha_j(v_m)-e^{2\pi i j/n}v_m\right\|= 0=\lim\limits_{m\to\I}\|v_ma-av_m\|$$
for every $a\in A$, and this is clearly equivalent to $\alpha$ having the unitary \Rp.\end{proof}

The following result is analogous to \autoref{can replace estimate by equality}, and so is its proof.

\begin{prop}\label{can replace estimate by equality in def of uRp} Let $B$ be a separable, unital \ca, let $n\in\N$ and let $\beta\colon \Z_n\to\Aut(B)$ be an action on $B$. Then $\beta$ has the unitary Rokhlin property if and only if for every finite set $F\subseteq B$ and every $\varepsilon > 0$, there is a unitary $u\in\U(B)$ such that
\be \item $\beta_k(u)=e^{2\pi i k/n} u$ for all $k\in\Z_n$;
\item $\|ub-bu\|<\varepsilon$ for all $b\in F$.\ee\end{prop}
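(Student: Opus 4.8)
The plan is to run the proof of Proposition~\ref{can replace estimate by equality} almost verbatim, replacing the Haar integral over $\T$ by the normalized sum over $\Z_n$. One implication is immediate: if for every finite $F$ and every $\ep>0$ there is a unitary $u$ with $\beta_k(u)=e^{2\pi i k/n}u$ and $\|ub-bu\|<\ep$ for $b\in F$, then $\|\beta_k(u)-e^{2\pi i k/n}u\|=0<\ep$, so $\beta$ has the unitary Rokhlin property. For the converse, first I would normalize $F$ so that $\|b\|\le 1$ for all $b\in F$, set $\ep_0=\min\{\tfrac13,\tfrac{\ep}{7+2\ep}\}$, and use the unitary Rokhlin property to choose $v\in\U(B)$ with $\|vb-bv\|<\ep_0$ for $b\in F$ and $\|\beta_k(v)-e^{2\pi i k/n}v\|<\ep_0$ for all $k\in\Z_n$. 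Then I would define
\[
x=\frac1n\sum_{k=0}^{n-1}e^{-2\pi i k/n}\beta_k(v).
\]

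The key point is that $x$ satisfies the covariance identity \emph{exactly}: for $j\in\Z_n$, using that $\beta_n=\id$ and that both $k\mapsto e^{-2\pi i k/n}$ and $k\mapsto\beta_k$ are $n$-periodic, the substitution $\ell=j+k$ permutes $\{0,\dots,n-1\}$ and gives
\[
\beta_j(x)=\frac1n\sum_{k=0}^{n-1}e^{-2\pi i k/n}\beta_{j+k}(v)=e^{2\pi i j/n}\,\frac1n\sum_{\ell=0}^{n-1}e^{-2\pi i \ell/n}\beta_\ell(v)=e^{2\pi i j/n}x.
\]
Averaging also yields $\|x-v\|\le\frac1n\sum_{k}\|\beta_k(v)-e^{2\pi i k/n}v\|<\ep_0$, hence $\|x\|\le 1$ and $\|x^\ast x-1\|=\|x^\ast x-v^\ast v\|\le\|x\|\,\|x-v\|+\|x-v\|\,\|v\|\le 2\ep_0<1$.

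It remains to replace $x$ by its unitary part. Since $x^\ast x$ is invertible with $\|(x^\ast x)^{-1}\|\le(1-2\ep_0)^{-1}$ and $\|(x^\ast x)^{-1/2}\|\le(1-2\ep_0)^{-1/2}$, the element $u=x(x^\ast x)^{-1/2}$ is a unitary in $B$. Because $\beta_j(x^\ast x)=(e^{2\pi i j/n}x)^\ast(e^{2\pi i j/n}x)=x^\ast x$, we get $\beta_j(u)=e^{2\pi i j/n}x(x^\ast x)^{-1/2}=e^{2\pi i j/n}u$, which is condition (1). Using $\|x\|\le 1$ and $0\le 1-(x^\ast x)^{1/2}\le 1-x^\ast x$ one estimates $\|u-x\|\le\|(x^\ast x)^{-1/2}-1\|\le\frac{2\ep_0}{\sqrt{1-2\ep_0}}$, so $\|u-v\|\le\frac{2\ep_0}{\sqrt{1-2\ep_0}}+\ep_0$; then for $b\in F$,
\[
\|ub-bu\|\le 2\|u-v\|+\|vb-bv\|<\frac{4\ep_0}{\sqrt{1-2\ep_0}}+3\ep_0<\frac{7\ep_0}{1-2\ep_0}\le\ep,
\]
by the choice of $\ep_0$. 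This is exactly the final estimate in the proof of Proposition~\ref{can replace estimate by equality}. The only step that genuinely differs from the circle case is the averaging, and the one (very mild) thing to be careful about there is the cyclic bookkeeping: $\beta_{j+k}$ must be read as $\beta_{(j+k)\bmod n}$ and the phase $e^{-2\pi i k/n}$ likewise interpreted mod $n$, so that the reindexing $\ell=j+k$ is a bijection of $\{0,\dots,n-1\}$ and pulls out precisely the factor $e^{2\pi i j/n}$. No new ideas beyond the earlier proposition are required.
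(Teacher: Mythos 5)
Your argument is correct, but it is not the route the paper takes. The paper's proof of this proposition does not redo the averaging: it first invokes equivariant semiprojectivity, namely that $(C(\T),\T,\mathrm{translation})$ is equivariantly semiprojective and hence, by Theorem 3.11 of the Phillips--S\o rensen--Thiel paper (restriction to a closed subgroup with compact quotient), so is $(C(\T),\Z_n,\mathrm{translation})$; the exact eigenvector relation $\beta_k(u)=e^{2\pi ik/n}u$ is then obtained by perturbing an approximately covariant unitary via this semiprojectivity, with the remaining estimates handled ``as in'' the circle case and left to the reader. You instead observe that for a \emph{finite} group no such machinery is needed: the spectral-subspace average $x=\frac1n\sum_{k=0}^{n-1}e^{-2\pi ik/n}\beta_k(v)$ is \emph{exactly} covariant (your mod-$n$ bookkeeping is the only point to check, and it is fine), it is $\ep_0$-close to $v$, and the polar-decomposition estimates from Proposition \ref{can replace estimate by equality} go through verbatim, including the final $\frac{7\ep_0}{1-2\ep_0}\leq\ep$ bound coming from $\ep_0=\min\{\frac13,\frac{\ep}{7+2\ep}\}$. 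What your approach buys is a completely self-contained and elementary proof, with all details written out and no dependence on the equivariant semiprojectivity literature; what the paper's approach buys is a template that works in settings where a simple averaging would not produce exact relations (more general equivariant generators-and-relations arguments), at the cost of an external citation and omitted details. Both are valid; yours is arguably the cleaner proof of this particular statement.
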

Similarly to what was pointed out after the statement of \autoref{can replace estimate by equality}, the definition of the unitary Rokhlin property differs in that in condition (1), one only requires $\|\beta_k(u)-e^{2\pi ik/n} u\|<\varepsilon$ for all $k\in\Z_n$.
\begin{proof} Recall that $(C(\T ), \T ,\verb"Lt")$ is equivariantly semiprojective by \autoref{rem:LtEqSj}. 
Since the quotient $\T /\Z_n$ is compact, it follows from Theorem 3.11 in \cite{PhiSorThi_eqsj} that the restriction $(C(\T ),\Z_n,\verb"Lt")$ is equivariantly semiprojective as well. The result now follows using an argument similar to the one used in the proof of \autoref{can replace estimate by equality}. The details are left to the reader. \end{proof}

\begin{prop}\label{strongly approx inner and circle Rp} Let $n\in \N$ and let $\beta\colon \Z_n\to\Aut(B)$ be an action of $\Z_n$ on
a unital separable \ca\ $B$.
\be\item The action $\beta$ has the unitary \Rp\ \ifo its dual action $\widehat{\beta}$ is strongly approximately inner.
\item The action $\beta$ is strongly approximately inner \ifo its dual action $\widehat{\beta}$ has the unitary \Rp.\ee\end{prop}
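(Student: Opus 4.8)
The plan is to deduce both equivalences from the structure of the crossed product $C=B\rtimes_\beta\Z_n$, together with the two ``finite data'' reformulations at our disposal: the characterization of strong approximate innerness in terms of unitaries of $B$ (the Lemma stated just before Lemma~\ref{approx repres and Rp}), and Definition~\ref{df: uRp} for the unitary \Rp\ (one may also invoke Proposition~\ref{can replace estimate by equality in def of uRp}). I will use repeatedly that $C$ is the closed span of $\{bv^j:b\in B,\ 0\le j<n\}$, where $v$ is the canonical unitary, that $v^n=1$ and $vbv^*=\beta_1(b)$, that the fixed point algebra of the dual action is $C^{\widehat\beta}=B$, and that $\widehat\beta_k|_B=\id_B$ while $\widehat\beta_k(v)=e^{2\pi i k/n}v$. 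The guiding idea is that in each implication one combines a unitary witnessing one of the two properties with the canonical unitary $v$ by a short computation to obtain a unitary witnessing the dual property; the only real points of care are matching the roots of unity, which is arranged by passing to adjoints, and, in the two ``$\Leftarrow$'' directions, pushing a unitary of $C$ down to $B$.

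The ``$\Rightarrow$'' directions need no push-down. For (1)$\Rightarrow$: given $F\subseteq B$ finite and $\ep>0$, pick $u\in\U(B)$ as in Definition~\ref{df: uRp}; then $w:=u^*\in\U(C)$ satisfies $\widehat\beta_1(w)=w$ (as $w\in B=C^{\widehat\beta}$), $\|\widehat\beta_1(b)-wbw^*\|=\|ub-bu\|<\ep$ for $b\in F$, and, using $vu=\beta_1(u)v$, $\|\widehat\beta_1(v)-wvw^*\|=\|e^{2\pi i/n}v-u^*vu\|=\|\beta_1(u)-e^{2\pi i/n}u\|<\ep$. A routine $\frac{\ep}{3}$ approximation over the spanning set $\{bv^j\}$ upgrades this to all finite subsets of $C$, so $\widehat\beta$ is strongly approximately inner. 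For (2)$\Rightarrow$: given $F\subseteq B$ finite and $\ep>0$, pick $w\in\U(B)$ with $\|\beta_1(w)-w\|<\ep$ and $\|\beta_1(b)-wbw^*\|<\ep$ for $b\in F$; then $U:=w^*v\in\U(C)$ satisfies $\widehat\beta_k(U)=e^{2\pi i k/n}U$ exactly, $\|Ub-bU\|=\|\beta_1(b)-wbw^*\|<\ep$ for $b\in F$, and $\|Uv-vU\|=\|\beta_1(w)-w\|<\ep$; again a $\frac{\ep}{3}$ argument and Definition~\ref{df: uRp} give that $\widehat\beta$ has the unitary \Rp.

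For the ``$\Leftarrow$'' directions I would first record the push-down: if $w\in\U(C)$ has $\|\widehat\beta_1(w)-w\|<\ep$, then $\|\widehat\beta_k(w)-w\|\le n\ep$ for every $k$, hence $\|E(w)-w\|\le n\ep$ for the canonical conditional expectation $E=\frac1n\sum_{k\in\Z_n}\widehat\beta_k\colon C\to B$; so $E(w)\in B$ is within $n\ep$ of the unitary $w$, and for $\ep$ small, functional calculus (as in the proof of Proposition~\ref{can replace estimate by equality}) yields $w'\in\U(B)$ with $\|w'-w\|$ of order $\ep$. Now for (1)$\Leftarrow$: apply the reformulation of strong approximate innerness to $\widehat\beta$ on $C$ and the set $F\cup\{v\}$ to get $w\in\U(C)$ with $\|\widehat\beta_1(w)-w\|$, $\|\widehat\beta_1(b)-wbw^*\|$ ($b\in F$) and $\|e^{2\pi i/n}v-wvw^*\|$ small; pushing $w$ to $w'\in\U(B)$ and multiplying $wvw^*\approx e^{2\pi i/n}v$ by $w^*$ on the left and $v^*$ on the right gives $\|\beta_1(w'^*)-e^{2\pi i/n}w'^*\|$ small, while $w'^*$ almost commutes with $F$; iterating over $k$ shows $w'^*$ witnesses the unitary \Rp\ of $\beta$. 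For (2)$\Leftarrow$: apply Definition~\ref{df: uRp} to $\widehat\beta$ on $C$ and the set $F\cup\{v\}$ to get $U\in\U(C)$ almost commuting with $F\cup\{v\}$ (hence with $v^*$) and with $\|\widehat\beta_k(U)-e^{2\pi i k/n}U\|<\ep$; set $w:=vU^*$, so that $\|\widehat\beta_k(w)-w\|<\ep$ for all $k$ and $w$ pushes down to $w'\in\U(B)$; using that $U$ almost commutes with $v$ and $F$, one checks that $\|\beta_1(w')-w'\|$ and $\|\beta_1(b)-w'bw'^*\|$ ($b\in F$) are small, so $\beta$ is strongly approximately inner.

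The difficulty here is purely bookkeeping: tracking the factors $e^{2\pi i k/n}$ through the three conjugation relations in play (the crossed product relation $vbv^*=\beta_1(b)$, the scaling $\widehat\beta_k(v)=e^{2\pi i k/n}v$, and the definition of the unitary \Rp), which is what dictates the systematic passage to adjoints, and checking that the push-down loses only a fixed multiple of $\ep$ so that tolerances can be arranged. As an alternative for part~(2), one can instead apply part~(1) to $\widehat\beta$ and invoke Takai duality, which identifies the bidual action $\widehat{\widehat\beta}$ on $C\rtimes_{\widehat\beta}\widehat{\Z_n}$ with a conjugate of $\beta\otimes\Ad\rho$ on $M_n(B)$; since strong approximate innerness is a conjugacy invariant and passes both to and from tensoring with $\id_{M_n}$ (the point being that a unitary in the sequence algebra of $M_n(B^\beta)$ implementing $\beta\otimes\id_{M_n}$ must commute with $1\otimes M_n$ and hence has the form $u\otimes1$), this recovers~(2). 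I would nonetheless give the direct argument above, as it is of comparable length and avoids quoting duality theorems.
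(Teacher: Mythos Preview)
Your proof is correct and follows essentially the same route as the paper: in each direction one combines the given unitary with the canonical unitary $v$ (and passes to adjoints to match the root of unity), obtaining a witness for the dual property. The one simplification the paper makes is to avoid the conditional-expectation push-down in the two $\Leftarrow$ directions: for (1)$\Leftarrow$ it works with the sequence-algebra Definition~\ref{df: sai}, which already places the implementing unitary in $(C^{\widehat\beta})^\I=B^\I$, and for (2)$\Leftarrow$ it invokes Proposition~\ref{can replace estimate by equality in def of uRp} to arrange $\widehat\beta_k(u)=e^{2\pi ik/n}u$ exactly, so that $w=vu^*$ lies in $B$ on the nose.
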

\begin{proof} Part (1). Assume that $\beta$ has the unitary \Rp. Use \autoref{unitary Rp and central sequence} to choose a unital equivariant 
\hm\ $\varphi\colon C(\T )\to B_\I\cap B'$. Denote by $u\in B_{\I}\cap B'$ the image under this \hm\ of the unitary $z\in C(\T)$ given by 
$z(\zeta)=\zt $ for $\zt\in\T$, and denote by $\lambda$ the implementing unitary representation of $\Z_n\in B\rtimes_\beta \Z_n$ for $\beta$. 
In $(B\rtimes_\beta\Z_n)_\I$, we have $u^*\lambda_j u=e^{2\pi i j/n}\lambda_j$ for all $j\in\Z_n$, and $ub=bu$ (f, $ubu^*=b$) for all $b\in B$. 
Therefore, if $\beta$ has the unitary \Rp, then $\widehat{\beta}$ is implemented by $u^*$, and thus it is strongly approximately inner. The 
converse follows from the same computation, as we have $(B\rtimes_\beta \Z_n)^{\widehat{\beta}}=B$.\\
\indent Part (2). Denote by $v$ the canonical unitary in the crossed product, and assume that $\beta$ is strongly approximately inner. Let $F\subseteq B\rtimes_{\beta}\Z_n$ be a finite subset, and let $\varepsilon>0$. Since $B$ and $v$ generate $B\rtimes_{\beta}\Z_n$, we can assume that there is a finite subset $F'\subseteq B$ such that $F=F'\cup\{v\}$. Choose $w\in\U(B)$ such that $\|\beta(w)-w\|<\varepsilon$ and $\|\beta(b)-wbw^*\|<\varepsilon$ for all $b\in F'$. Since $\beta(b)=vbv^*$ for every $b\in B$, if we let $u=w^*v$, the first of these conditions is equivalent to $\|vu-uv\|<\varepsilon$, while the second one is equivalent to $\|ub-bu\|<\varepsilon$ for all $b\in F'$. On the other hand, $\widehat{\beta}_k(u)=\widehat{\beta}_k(w^*v)=w^*(e^{2\pi i k/n}v)=e^{2\pi i k/n}u$ for $k\in\Z_n$. Thus, $u$ is the desired unitary, and $\widehat{\beta}$ has the unitary Rokhlin property.\\
\indent Conversely, assume that $\widehat{\beta}$ has the unitary Rokhlin property. Let $F'\subseteq B$ be a finite subset, and let $\varepsilon>0$. Set $F=F'\cup\{v\}$. Use \autoref{can replace estimate by equality in def of uRp} to choose $u$ in the unitary group of $A\rtimes_\beta\Z_n$ such that $\|ub-bu\|<\varepsilon$ for all $b\in F$, and $\widehat{\beta}_k(u)=e^{2\pi i k/n}u$ for all $k\in\Z_n$. Set $w=vu^*$. Then $w\in B$ since
$$\widehat{\beta}_k(w)=e^{2\pi i k/n}v\overline{e^{2\pi i k/n}}u^*=vu^*=w$$
for all $k\in\Z_n$ and $(B\rtimes_{\beta}\Z_n)^{\Z_n}=B$. On the other hand,
$$\|\beta(b)-wbw^*\|=\|vbv^*-vu^*buv^*\|=\|b-u^*bu\|=\|ub-bu\|<\varepsilon,$$
for all $b\in F$. Hence $w$ is an implementing unitary for $F'$ and $\varepsilon$, and $\beta$ is strongly approximately inner.\end{proof}

\begin{cor}\label{restriction of Rokhlin, dual is str approx inner} Let $\alpha\colon\T\to\Aut(A)$ be an action with the \Rp, and let $n\in\N$. Then $\widehat{\alpha|_n}\colon\Z_n\to\Aut(A\rtimes_{\alpha|_n}\Z_n)$ is strongly approximately inner.\end{cor}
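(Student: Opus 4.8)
The plan is to obtain this statement as an immediate consequence of the two results that directly precede it, so the proof amounts to assembling a short chain of implications. First I would observe that, since $\alpha$ has the Rokhlin property, Lemma \ref{restriction has uRp} applies and gives that the restriction $\alpha|_n\colon\Z_n\to\Aut(A)$ has the unitary Rokhlin property, for every $n\in\N$. Here $A$ is the ambient unital, separable \ca\ implicit in the statement, so that the hypotheses needed by the cited results are in force.

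Next I would invoke part (a) of Proposition \ref{strongly approx inner and circle Rp}, applied with $B=A$ and $\beta=\alpha|_n$: that proposition asserts precisely that $\beta$ has the unitary Rokhlin property if and only if its dual action $\widehat{\beta}$ is strongly approximately inner. Since the left-hand side has just been established, we conclude that $\widehat{\alpha|_n}\colon\Z_n\to\Aut(A\rtimes_{\alpha|_n}\Z_n)$ is strongly approximately inner, which is exactly the assertion of the corollary.

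Since both ingredients have already been proved, there is no genuine obstacle: the corollary is just the composite ``Rokhlin property $\Rightarrow$ unitary Rokhlin property of $\alpha|_n$ $\Rightarrow$ strong approximate innerness of $\widehat{\alpha|_n}$''. The only point to keep an eye on is that the separability and unitality hypotheses required by Proposition \ref{strongly approx inner and circle Rp} are available, but this is part of the running setup. It is worth recording that this corollary is the first step of the strategy outlined before Definition \ref{df: sai}: the remaining task, carried out in the proof of Theorem \ref{restriction of Rp has Rp}, will be to upgrade ``strongly approximately inner'' to ``approximately representable'' under the additional hypothesis that $A$ absorbs the universal UHF-algebra $\mathcal{Q}$.
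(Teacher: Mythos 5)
Your proposal is correct and is exactly the paper's own argument: Lemma \ref{restriction has uRp} gives the unitary Rokhlin property for $\alpha|_n$, and part (a) of Proposition \ref{strongly approx inner and circle Rp} then yields strong approximate innerness of $\widehat{\alpha|_n}$.
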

\begin{proof} The restriction $\alpha|_n$ has the unitary \Rp\ by \autoref{restriction has uRp}, and by part (1) of
\autoref{strongly approx inner and circle Rp}, its dual $\widehat{\alpha|_n}$ is strongly approximately inner.\end{proof}

The next ingredient needed is showing that crossed products by restrictions of Rokhlin actions of compact groups preserve absorption of
strongly self-absorbing \ca s. For Rokhlin actions, this was shown by Hirshberg and Winter in \cite{HirWin_Rp}. The more general
statement is proved using similar ideas.

\begin{prop}\label{sequence unital embeddings fixed point restriction} Let $A$ be a separable unital \ca, let $G$ be a compact Hausdorff second countable group, and let $\af\colon G \to \Aut(A)$ be an action satisfying the Rokhlin property. Let $H$ be a closed subgroup of $G$. If $B$ is a unital separable \ca\ which admits a central sequence of unital homomorphisms into $A$, then $B$ admits a unital homomorphism into the fixed point subalgebra of $\alpha|_H\in A_\I\cap A'$.\end{prop}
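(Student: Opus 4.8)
The plan is to average the given central sequence of homomorphisms over $G$, using the equivariant homomorphism furnished by the Rokhlin property as a ``Rokhlin tower'' which absorbs the action; for finite groups this is an explicit computation, and for general compact groups it is the approximation argument of Hirshberg and Winter. First I would observe that it is enough to produce a unital homomorphism $\Psi\colon B\to(A_\I\cap A')^{\alpha_\I}$ into the fixed point subalgebra of the whole group $G$, since $(A_\I\cap A')^{\alpha_\I}$ is contained in the fixed point subalgebra of $\alpha|_H$ in $A_\I\cap A'$ for every closed subgroup $H\subseteq G$. By the Hirshberg--Winter form of the Rokhlin property for the compact group action $\alpha$, there is a unital equivariant homomorphism $\varphi\colon (C(G),\mathrm{Lt})\to(A_\I\cap A',\alpha_\I)$, where $G$ acts on $C(G)$ by left translation; note that $\varphi(C(G))$ is $\alpha_\I$-invariant, with $\alpha_\I$ restricting to translation on it. The hypothesis on $B$ provides unital homomorphisms $\iota_n\colon B\to A$ with $\|\iota_n(b)a-a\iota_n(b)\|\to 0$ for all $b\in B$ and $a\in A$; write $\iota\colon B\to A_\I\cap A'$ for the induced unital homomorphism.

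I would next reindex. A standard diagonal argument --- lift a countable dense subset of $\varphi(C(G))$ to bounded sequences in $A$, and then pass to a subsequence of the $\iota_n$ which asymptotically commutes with the entries of these lifts --- lets me assume that the range of $\iota$ commutes with $\varphi(C(G))$. Since each $\alpha_{g,\I}$ maps $A_\I\cap A'$ onto itself and maps $\varphi(C(G))$ onto itself, it follows that $\alpha_{g,\I}(\iota(b))\in A_\I\cap A'$ and that $\alpha_{g,\I}(\iota(b))$ commutes with all of $\varphi(C(G))$, for every $g\in G$ and $b\in B$.

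Now I would build $\Psi$. When $G$ is finite, the elements $e_g:=\varphi(\mathbf{1}_{\{g\}})$ for $g\in G$ are pairwise orthogonal projections in $A_\I\cap A'$ with $\sum_{g\in G}e_g=1$ and $\alpha_{g,\I}(e_h)=e_{gh}$, and I would put
$$\Psi(b)=\sum_{g\in G}e_g\,\alpha_{g,\I}(\iota(b)).$$
Using that each $\alpha_{g,\I}(\iota(b))$ commutes with every $e_h$, that the $e_g$ are orthogonal projections with $\sum_g e_g=1$, and that each $\alpha_{g,\I}\circ\iota$ is a homomorphism, one checks directly that $\Psi$ is a unital $*$-homomorphism into $A_\I\cap A'$; and reindexing the sum by $g\mapsto kg$ gives $\alpha_{k,\I}\circ\Psi=\Psi$ for all $k\in G$, so $\Psi$ maps into $(A_\I\cap A')^{\alpha_\I}$. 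For a general compact (second countable) group $G$ there is no finite Rokhlin tower of projections inside $\varphi(C(G))$, so I would carry out the analogous construction at the level of representing sequences: using asymptotically multiplicative and asymptotically equivariant lifts $\varphi^{(n)}\colon C(G)\to A$ of $\varphi$ and, for each $n$, a finite positive partition of unity $\rho^{(n)}_1,\dots,\rho^{(n)}_{m_n}\in C(G)$ with chosen points $g^{(n)}_i\in\mathrm{supp}\,\rho^{(n)}_i$, taken fine enough relative to the (genuinely continuous) orbit map $g\mapsto\alpha_g(\iota_n(b))$ in $A$, set
$$\psi_n(b)=\sum_{i=1}^{m_n}\varphi^{(n)}\!\big(\rho^{(n)}_i\big)\,\alpha_{g^{(n)}_i}\!\big(\iota_n(b)\big).$$
The $\psi_n$ are contractive and asymptotically unital, asymptotically $*$-preserving and asymptotically multiplicative --- the defect in multiplicativity is controlled, via the estimate $\big\|\sum_{i,j}\varphi^{(n)}(\rho^{(n)}_i\rho^{(n)}_j)y_{ij}\big\|\le\max_{i,j}\|y_{ij}\|$ for the commuting positive elements $\varphi^{(n)}(\rho^{(n)}_i\rho^{(n)}_j)$, whose sum is close to $1$, by the oscillation of $g\mapsto\alpha_g(\iota_n(b))$ over overlapping supports --- so that an $\frac{\ep}{3}$-argument shows $\Psi(b):=\overline{(\psi_n(b))}_{n\in\N}$ defines a unital $*$-homomorphism $B\to A_\I\cap A'$; the approximate equivariance of the $\varphi^{(n)}$ together with left-invariance of the Haar measure then gives $\alpha_{k,\I}\circ\Psi=\Psi$ for all $k\in G$, so again $\Psi$ lands in $(A_\I\cap A')^{\alpha_\I}$.

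I expect the main obstacle to be the general compact case: one cannot use projections and must instead replace the Rokhlin tower by increasingly fine partitions of unity and run the construction at the level of representing sequences, keeping careful track of the several error terms --- the failures of multiplicativity, of unitality, and of equivariance --- and of the fact that the induced action on $A_\I$ is not norm continuous, which is precisely why the per-index choice of a sufficiently fine partition (relative to the continuous orbit maps of the fixed elements $\iota_n(b)$) is essential. The finite-group case, by contrast, is an entirely explicit finite computation.
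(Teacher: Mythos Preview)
Your proposal is correct and follows the same route as the paper: reduce to $H=G$ via the observation $(A_\I\cap A')^{\alpha_\I}\subseteq (A_\I\cap A')^{(\alpha|_H)_\I}$, and then invoke the Hirshberg--Winter averaging argument. The only difference is that the paper simply cites Theorem~3.3 of \cite{hirshberg winter} as a black box for the second step, whereas you unpack its proof (the Rokhlin-tower averaging, first for finite $G$ and then via partitions of unity for general compact $G$).
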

\begin{proof} Notice that $\left(A_\I\cap A'\right)^\alpha$ is a subalgebra of $\left(A_\I\cap A'\right)^{\alpha|_H}$. The result now follows from Theorem 3.3 in \cite{HirWin_Rp}.\end{proof}

\begin{rem} In the proposition above, if $B$ is moreover assumed to be simple, for example if it is strongly self-absorbing, it follows that the unital homomorphism obtained is actually an embedding, since it is not zero.\end{rem}

Recall the following result by Hirshberg and Winter.

\begin{lma}\label{embedding fixed by the action} (Lemma 2.3 of \cite{HirWin_Rp}.) Let $A$ and $\D$ be unital separable \ca s. Let $G$ be a
compact group and let $\alpha\colon G\to\Aut(A)$ be a continuous action. If there is a unital homomorphism $\D\to (A_\I\cap A')^G$, then
there is a unital homomorphism
\[\D \to \left(M(A\rtimes_\alpha G)\right)_\I\cap (A\rtimes_\alpha G)'.\] \end{lma}

\begin{thm}\label{crossed product of restriction absorbs D} Let $\D$ be a strongly self-absorbing \ca, let $A$ be a $\D$-absorbing, separable \uca,
and let $\alpha\colon\T\to\Aut(A)$ be an action with the \Rp. Then, for every $n\in\N$, the crossed product $A\rtimes_{\alpha|_n}\Z_n$ is a unital
separable $\D$-absorbing \ca.\end{thm}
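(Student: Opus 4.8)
The plan is to combine Proposition \ref{sequence unital embeddings fixed point restriction} and Lemma \ref{embedding fixed by the action} with the Toms--Winter characterization of $\D$-absorption in terms of central sequence algebras. Two preliminary observations will be used freely: for every $n\in\N$ the crossed product $A\rtimes_{\alpha|_n}\Z_n$ is separable, being as a vector space a finite direct sum of copies of $A$; and it is unital, with unit $1_A$ and with the canonical implementing unitary $v$ lying inside it, so that $M(A\rtimes_{\alpha|_n}\Z_n)=A\rtimes_{\alpha|_n}\Z_n$.

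First I would use that $A$ is separable, unital and $\D$-absorbing to produce a central sequence of unital homomorphisms $\D\to A$: by Remark \ref{remark} (equivalently Theorem 7.2.2 of \cite{Rordam classification}) we may fix an isomorphism $\varphi\colon A\otimes\D\to A$ such that $a\mapsto\varphi(a\otimes 1_\D)$ is approximately unitarily equivalent to $\id_A$, and then, for unitaries $(w_m)_{m\in\N}$ in $A$ witnessing this approximate unitary equivalence, the unital homomorphisms $d\mapsto w_m\varphi(1_A\otimes d)w_m^*$ constitute such a central sequence. In particular $\D$ satisfies the hypotheses of Proposition \ref{sequence unital embeddings fixed point restriction}; applying that proposition to the Rokhlin action $\alpha\colon\T\to\Aut(A)$ and the closed subgroup $\Z_n\subseteq\T$ yields a unital homomorphism $\D\to (A_\I\cap A')^{\alpha|_n}$. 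Feeding this into Lemma \ref{embedding fixed by the action}, applied now to the finite group $\Z_n$ and the action $\alpha|_n$, produces a unital homomorphism
$$\D\longrightarrow \big(M(A\rtimes_{\alpha|_n}\Z_n)\big)_\I\cap (A\rtimes_{\alpha|_n}\Z_n)' = (A\rtimes_{\alpha|_n}\Z_n)_\I\cap (A\rtimes_{\alpha|_n}\Z_n)',$$
the equality being the unitality observation above. Since $A\rtimes_{\alpha|_n}\Z_n$ is a separable unital \ca\ into whose central sequence algebra the strongly self-absorbing algebra $\D$ embeds unitally, the converse direction of the Toms--Winter characterization (see \cite{toms winter ssa}) gives $(A\rtimes_{\alpha|_n}\Z_n)\otimes\D\cong A\rtimes_{\alpha|_n}\Z_n$, which is the assertion.

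The argument is in essence a diagram chase through machinery that is already in place, so no genuinely new estimate is required beyond those internal to Proposition \ref{sequence unital embeddings fixed point restriction} (which itself rests on Theorem 3.3 of \cite{hirshberg winter}). The one point I expect to need care is the bookkeeping at the two interfaces with the Toms--Winter theory: checking that $\D$-absorption of $A$ delivers a \emph{central sequence of unital homomorphisms} in precisely the sense used in the hypothesis of Proposition \ref{sequence unital embeddings fixed point restriction}, and, symmetrically, that the homomorphism $\D\to (A\rtimes_{\alpha|_n}\Z_n)_\I\cap(A\rtimes_{\alpha|_n}\Z_n)'$ obtained above really does witness $\D$-stability of the crossed product. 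Both of these are standard for strongly self-absorbing $\D$ and separable $A$.
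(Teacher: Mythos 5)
Your proposal is correct and follows essentially the same route as the paper: a central sequence of unital homomorphisms $\D\to A$ from Theorem 7.2.2 of \cite{Rordam classification}, then Proposition \ref{sequence unital embeddings fixed point restriction} to land in $(A_\I\cap A')^{\alpha|_n}$, then Lemma \ref{embedding fixed by the action} applied to $\Z_n$, and finally Theorem 7.2.2 again to conclude $\D$-absorption of the crossed product. The only cosmetic difference is that the paper also remarks (via simplicity of $\D$) that the intermediate homomorphism is an embedding, which, as you implicitly note, is not needed for the conclusion.
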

\begin{proof} By Theorem 7.2.2 in \cite{Ror_BookClassif}, there exists a unital embedding of $\D$ into $A_\I\cap A'$, which is equivalent to
the existence of a central sequence of unital embeddings of $\D$ into $A$. Use \autoref{sequence unital embeddings fixed point restriction}
to obtain a unital homomorphism of $\D$ into the fixed point subalgebra of $\alpha|_{\Z_n}\in A_\I\cap A'$. It follows that this homomorphism is
actually an embedding, since it is not zero and $\D$ is simple, by Theorem 1.6 in \cite{TomWin_SSA}. Lemma 2.3 in \cite{HirWin_Rp}
(here reproduced as \autoref{embedding fixed by the action})
provides us with a unital embedding of $\D$ into $(A\rtimes_\alpha \Z_n)_\I\cap (A\rtimes_\alpha \Z_n)'$, which again by Theorem 7.2.2 in
\cite{Ror_BookClassif} is equivalent to $A\rtimes_\alpha \Z_n$ being $\D$-absorbing, since $\D$ is strongly self-absorbing. \end{proof}

The following is the main theorem of this section.

\begin{thm}\label{characterize restriction has Rp} Let $A$ be a separable \uca, let $n\in\N$ and let $\alpha\colon\T\to\Aut(A)$ be an action with the \Rp. Suppose that $A$ absorbs $M_{n^\I}$. Then ${\alpha|_n}$ has the \Rp.\end{thm}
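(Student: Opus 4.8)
The plan is to pass to the dual action and then reduce everything to a divisibility statement of the kind established by Izumi. By Lemma~\ref{approx repres and Rp}, the action $\alpha|_n$ has the \Rp\ if and only if the dual action $\widehat{\alpha|_n}$ on the crossed product $C:=A\rtimes_{\alpha|_n}\Z_n$ is approximately representable, so it is enough to prove the latter. Two ingredients are already available. First, by Lemma~\ref{restriction has uRp} the restriction $\alpha|_n$ has the unitary \Rp, whence by Proposition~\ref{strongly approx inner and circle Rp} --- this is exactly Corollary~\ref{restriction of Rokhlin, dual is str approx inner} --- the dual action $\widehat{\alpha|_n}$ is \emph{strongly approximately inner}. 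Second, since $M_{n^\I}$ is strongly self-absorbing and $A$ absorbs it, Theorem~\ref{crossed product of restriction absorbs D} shows that $C$ is separable, unital and $M_{n^\I}$-absorbing. So the theorem follows once one proves the general assertion: \emph{every strongly approximately inner action of $\Z_n$ on a separable, unital, $M_{n^\I}$-absorbing \ca\ is approximately representable.} I would prove this following the method of \cite{izumi finite group actions I} and \cite{izumi finite group actions II}.

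To prove the assertion, write $\beta$ for the action on $C$ and $\sigma=\beta_1$ for its generating automorphism. By Definition~\ref{df: sai} (or the characterization in terms of elements of $C$ given just after Notation~\ref{generating automorphism}), strong approximate innerness provides a unitary $u\in(C^\beta)_\I$ with $\sigma(c)=ucu^*$ for all $c\in C$, and $\beta$ is approximately representable precisely when such a $u$ can be chosen with $u^n=1$, so that $1,u,\ldots,u^{n-1}$ is a representation of $\Z_n$ implementing $\sigma$. The obstruction is the ``defect'' $u^n$: it is a unitary which commutes with $C$ and is fixed by $\beta$ (because $\sigma^n=\id$), but there is no reason a priori for it to be close to $1$. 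Conjugation by $u$ restricts to an automorphism $\tau$ of $E:=(C^\beta)_\I\cap C'$ with $\tau^n=\Ad(u^n)$, and for any unitary $z\in E$ the unitary $uz$ still implements $\sigma$ while $(uz)^n=u^n\,\tau^{-(n-1)}(z)\cdots\tau^{-1}(z)\,z$. Thus the whole problem comes down to solving, in $E$, the $\tau$-twisted equation $\tau^{-(n-1)}(z)\cdots\tau^{-1}(z)\,z=u^{-n}$ --- in essence, the existence of a ($\tau$-twisted) $n$-th root of the unitary $u^{-n}$ inside $E$.

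This is where the hypothesis $A\otimes M_{n^\I}\cong A$ enters, and where I expect the main difficulty to lie. Because $C$ absorbs $M_{n^\I}$, a reindexing argument (in the spirit of the proof of Theorem~7.2.2 in \cite{Rordam classification}) lets one produce unital copies of $M_{n^\I}$ inside $E$, and inside the part of $E$ that matters for the equation above, independent of the separable data $C\cup\{u\}$; in particular the relevant $K_1$ group becomes uniquely $n$-divisible. This is precisely the negation of the $K$-theoretic obstruction exhibited in Example~\ref{eg restriction on pi ca}, and it is what $\Ot$-absorption alone of a larger algebra cannot supply. With enough divisibility in hand one solves for $z$, forms the order-$n$ unitary $u':=uz$ implementing $\sigma$, and concludes that $\widehat{\alpha|_n}$ is approximately representable; by the first paragraph, $\alpha|_n$ then has the \Rp.

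The hard part is therefore the twisted $n$-th root step and the bookkeeping that comes with it: keeping the corrected unitary inside the fixed-point subalgebra $C^\beta$, ensuring the copies of $M_{n^\I}$ one uses are genuinely independent of $C\cup\{u\}$, and transferring the construction back from the sequence algebra $(C^\beta)_\I$ to honest unitaries in $C$ with $\ep$'s. These are the technical points that Izumi handles for finite abelian group actions, and the proposed proof would adapt them to $\widehat{\alpha|_n}$.
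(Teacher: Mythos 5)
Your opening reduction coincides with the paper's: by Lemma \ref{approx repres and Rp} it suffices to prove that $\widehat{\alpha|_n}$ is approximately representable, and strong approximate innerness of $\widehat{\alpha|_n}$ comes from Corollary \ref{restriction of Rokhlin, dual is str approx inner}. After that you diverge, and the divergence contains a genuine gap. Everything in your plan rests on the unproven assertion that every strongly approximately inner $\Z_n$-action on a separable, unital, $M_{n^\I}$-absorbing \ca\ is approximately representable, with $M_{n^\I}$-absorption of $C=A\rtimes_{\alpha|_n}\Z_n$ supplied by Theorem \ref{crossed product of restriction absorbs D}. But your own twisted-root equation has to be solved inside $E=\big(C^{\widehat{\alpha|_n}}\big)_\I\cap C'$, i.e.\ using central sequences of the crossed product whose entries lie in the fixed-point algebra $C^{\widehat{\alpha|_n}}=A$ (equivalently, by the Claim in the paper's proof, asymptotically $\alpha|_n$-invariant central sequences of $A$), whereas $M_{n^\I}$-absorption of $C$ only yields unital copies of $M_{n^\I}$ in $C_\I\cap C'$. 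Nothing in your sketch transports these copies, or the divisibility they provide, into $E$: ``independence from $C\cup\{u\}$'' does not help, because an element of $C_\I\cap C'$ need not commute with the implementing unitary $u\in A_\I$, let alone be representable by sequences from $A$. This is precisely where the paper uses the \Rp\ of the circle action a \emph{second} time: Proposition \ref{sequence unital embeddings fixed point restriction} (based on Theorem 3.3 of \cite{hirshberg winter}) pushes a central unital copy of $M_n$, obtained from $A\otimes M_{n^\I}\cong A$, into $(A_\I\cap A')^{(\alpha|_n)_\I}$, which is then identified with $E$; Izumi's Lemma 3.10 in \cite{izumi finite group actions I}, whose hypothesis is exactly a unital homomorphism $M_n\to E$ for a strongly approximately inner action, finishes the proof. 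Your reduction discards the circle action at the decisive moment, and the surviving statement is much stronger than the theorem: it would say that any $\Z_n$-action with the unitary \Rp\ whose crossed product absorbs $M_{n^\I}$ has the \Rp, which is neither proved in your sketch nor available in the cited literature.

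Two smaller points. First, the appeal to unique $n$-divisibility of ``the relevant $K_1$ group'' misidentifies both the obstruction and the method: the obstruction in Example \ref{eg restriction on pi ca} is non-divisibility of $K_0$ of the original algebra (via Theorem 3.4 of \cite{izumi finite group actions II}), and in any case solving the $\tau$-twisted root equation in a central sequence algebra is not a $K_1$ computation; Izumi's argument manufactures the order-$n$ correction directly from an embedded copy of $M_n$ in $E$. Second, even granting your reduction, the ``bookkeeping'' you defer (keeping the corrected unitary in the fixed-point part, locating the matrix copies correctly) is not routine adaptation of Izumi but is exactly the missing ingredient; so as written the proposal does not yield a proof, although its first paragraph is a correct start that matches the paper.
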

\begin{proof} By \autoref{approx repres and Rp}, it is enough to show that $\widehat{\alpha|_n}\colon\Z_n\to\Aut(A\rtimes_{\alpha|_n}\Z_n)$ is approximately representable. Recall that by \autoref{restriction of Rokhlin, dual is str approx inner}, the action $\widehat{\alpha|_n}$ is strongly approximately inner. In view of Lemma 3.10 in \cite{Izu_RpI}, to show that it is in fact approximately representable, it is enough to show that there is a unital map
$$M_n\to \left((A\rtimes_{\alpha|_n}\Z_n)^{\widehat{{\alpha|_n}}}\right)_\I\cap (A\rtimes_{\alpha|_n}\Z_n)',$$
where the relative commutant is taken in $(A\rtimes_{\alpha|_n}\Z_n)_\I$.

\vspace{0.3cm}

\textbf{\underline{Claim:}} $\left((A\rtimes_{\alpha|_n}\Z_n)^{\widehat{{\alpha|_n}}}\right)_\I\cap (A\rtimes_{\alpha|_n}\Z_n)'=(A_\I \cap A')^{(\alpha|_n)_\I}$.\\
\indent Since $(A\rtimes_{\alpha|_n}\Z_n)^{\widehat{\alpha|_n}}=A$, we have
\begin{align*}\left((A\rtimes_{\alpha|_n}\Z_n)^{\widehat{{\alpha|_n}}}\right)_\I\cap (A\rtimes_{\alpha|_n}\Z_n)'=A_\I\cap (A\rtimes_{\alpha|_n}\Z_n)'\\
=\left\{\overline{(a_m)}_{m\in\N}\in A_\I\colon \lim\limits_{m\to\I}\|a_mx-xa_m\|= 0 \mbox{ for all } x\in A\rtimes_{\alpha|_n}\Z_n\right\}. \end{align*}
\indent Let $v$ be the canonical unitary in $A\rtimes_{\alpha|_n}\Z_n$ that implements $\alpha|_n$ in the crossed product. Then for a bounded sequence
$(a_m)_{m\in\N}\in A$, the condition $\lim\limits_{m\to\I}\|a_mx-xa_m\|= 0$ for all $x\in A\rtimes_{\alpha|_n}\Z_n$ is equivalent to
$\lim\limits_{m\to\I}\|a_ma-aa_m\|= 0$ for all $a\in A$ and $\lim\limits_{m\to\I}\|a_mv-va_m\|= 0$. The above set is therefore equal to
$$\left\{\overline{(a_m)}_{m\in\N}\in A_\I\colon
\begin{aligned}
& \lim\limits_{m\to\I}\|a_ma-aa_m\|= 0 \mbox{ for all } a\in A \mbox{ and }\\
& \ \ \ \ \ \ \ \ \lim\limits_{m\to\I}\|(\alpha|_n)(a_m)-a_m\|= 0
\end{aligned}
 \right\},$$
which is precisely the same as the subset of $A_\I\cap A'$ that is fixed under the action on $A_\I\cap A'$ induced by $\alpha|_n$. This proves the claim.

\vspace{0.3cm}

Since $A$ absorbs the UHF-algebra $M_{n^\I}$, it follows that there is a unital embedding $\iota\colon M_n\to A_\I\cap A'$. By \autoref{sequence unital embeddings fixed point restriction}, there is a unital homomorphism $M_n \to (A_\I\cap A')^{(\alpha|_n)_\I}$.
Using the claim above, we conclude that there is a unital homomorphism
$$M_n \to \left((A\rtimes_{\alpha|_n}\Z_n)^{\widehat{{\alpha|_n}}}\right)_\I\cap (A\rtimes_{\alpha|_n}\Z_n)'.$$
This homomorphism is necessarily an embedding, since it is not zero. Apply Lemma 3.10 in \cite{Izu_RpI} to the action $\widehat{\alpha|_n}\colon\Z_n\to \Aut(A\rtimes_{\alpha|_n}\Z_n)$ to conclude that $\widehat{\alpha|_n}$ is approximately representable, and hence that $\alpha|_n$ has the \Rp, thanks to \autoref{approx repres and Rp}. \end{proof}

The following partial converse to \autoref{characterize restriction has Rp} holds. The same result is likely to be true for a larger class of \ca s.

Recall that if $A$ is a stably finite \ca, then its Murray-von Neumann semigroup
$V(A)$ can be naturally identified with the subsemigroup of its Cuntz semigroup $\mathrm{Cu}(A)$ consisting of
compact elements. Additionally, if $A$ has real rank zero, then every element in $\mathrm{Cu}(A)$ is the
supremum of an increasing sequence in $V(A)\subseteq \mathrm{Cu}(A)$.

\begin{thm}\label{converse to restriction has Rp if algebra absorbs UHF}
Let $A$ belong to one of the following classes of \ca s:
\be\item Unital Kirchberg algebras satisfying the UCT;
\item Simple, nuclear, separable \uca s with tracial rank zero satisfying the UCT;
\item Unital real rank zero direct limits of one-dimensional noncommutative CW-complexes with trivial
$K_1$-group (this includes all AF-algebras).\ee
Let $\alpha\colon\T\to\Aut(A)$ be a continuous action and let $n\in\N$. If $\alpha|_n$ has the \Rp, then $A$ absorbs $M_{n^\I}$. \end{thm}
\begin{proof} Assume that $\alpha|_n$ has the \Rp. Since $\alpha$ induces the trivial action on $K$-theory by \autoref{trivial action on K-thy},
so does $\alpha|_n$. For algebras in the first two classes, the result follows from Theorem 3.4 in
\cite{Izu_RpII} and Theorem 3.5 in \cite{Izu_RpII}, respectively. For algebras in the third class, it follows
that $\alpha|_n$ acts trivially on the Murray-von Neumann semigroup $V(A)$. Since every element in $\mathrm{Cu}(A)$
is the supremum of an increasing sequence in $V(A)$, it follows that $\alpha|_n$ acts trivially on $\mathrm{Cu}(A)$ as
well. The result now follows from Theorem 4.2 in \cite{GarSan_RokConstrI}.\end{proof}

Denote by $\mathcal{Q}$ the universal UHF-algebra, that is, the unique, up to isomorphism, UHF-algebra with $K$-theory
$$\left(K_0(\mathcal{Q}),\left[1_{\mathcal{Q}}\right]\right)\cong (\Q,1).$$
It is well-known that $\mathcal{Q}\otimes M_{n^\I}\cong \mathcal{Q}$ for all $n\in \N$, and that $\mathcal{Q}\otimes \Ot\cong \Ot$.

\begin{cor}\label{restriction of Rp has Rp} Let $A$ be a separable, $\mathcal{Q}$-absorbing \uca, let $\alpha\colon\T\to\Aut(A)$ be an action
with the \Rp\ and let $n\in\N$. Then ${\alpha|_n}$ has the \Rp. In particular, restrictions to finite subgroups of circle actions with the \Rp\ on separable, unital
$\Ot$-absorbing \ca s, again have the \Rp. \end{cor}

We finish this work by showing that the Rokhlin property for a circle action cannot in general be
determined just by looking at its restrictions to finite subgroups.

\begin{eg}\label{all restrictions have Rp, but action does not} There are a \uca\ $A$ and a circle action on $A$ such that its restriction
to every proper subgroup has the \Rp, but the action itself does not. \\
\indent Let $A$ be the universal UHF-algebra, that is, $A=\varinjlim(M_{n!},\iota_n)$ where $\iota_n\colon M_{n!}\to M_{(n+1)!}$ is
given by $\iota_n(a)=\diag(a,\ldots,a)$ for all $a\in M_{n!}$. For every $n\in\N$, let $\alpha^{(n)}\colon \T\to\Aut(M_{n!})$ be given by
$$\alpha^{(n)}_\zeta=\Ad(\diag(1,\zt,\ldots,\zt^{n!-1}))$$
for all $\zeta\in\T$. Then $\iota_n\circ\alpha^{(n)}_\zeta=\alpha^{(n+1)}_\zeta\circ\iota_n$ for all $n\in\N$ and all $\zeta\in\T$,
and hence there is a direct limit action $\alpha=\varinjlim\alpha^{(n)}$ of $\T$ on $A$. This action does not have the \Rp\ by \autoref{no direct limit actions with the Rp on AF-algebras}.\\
\indent On the other hand, we claim that given $m\in\N$, the restriction $\alpha|_m\colon \Z_m\to\Aut(A)$ has the \Rp. So fix $m\in\N$. Then $\alpha|_m$ is the direct limit of the actions $\left(\alpha^{(n)}|_m\right)_{n\in\N}$, whose generating automorphisms are
$$\alpha^{(n)}_{e^{2\pi i/m}}=\Ad(\diag(1,e^{2\pi i/m},\ldots,e^{2\pi i(n!-1)/m})).$$
Let $F\subseteq A$ be a finite subset and let $\varepsilon>0$. Write $F=\{a_1,\ldots,a_N\}$. Since $\bigcup\limits_{n\in\N}M_{n!}$ is dense in $A$, there are $k\in\N$ and a finite subset $F'=\{b_1,\ldots,b_N\}\subseteq M_{k!}$ such that $\|a_j-b_j\|<\frac{\ep}{2}$ for all $j=1,\ldots,N$. \\
\indent Let $n\geq \max\{k,m\}$. Then the $\Z_m$-action $\alpha^{(n)}|_m$ on $M_{n!}$ is generated by the automorphism
$$\alpha^{(n)}_{e^{2\pi i/m}}=\Ad(1,e^{2\pi i/m},\ldots,e^{2\pi i(m-1)/m},\ldots,1,e^{2\pi i/m},\ldots,e^{2\pi i(m-1)/m}).$$
(There are $n!/m$ repetitions.) Denote by $e_0$ the projection
$$1_{M_{(n-1)!}}\otimes \left(
               \begin{array}{ccccccccccc}
                 \frac{1}{m} & \frac{1}{m} & \cdots & \frac{1}{m} & 0 &\cdots & 0 & 0 & 0 & \cdots & 0 \\
                 \frac{1}{m} & \frac{1}{m} & \cdots & \frac{1}{m} & 0 &\cdots & 0 & 0 & 0 & \cdots & 0 \\
                 \vdots & \vdots & \ddots & \vdots &\vdots &\cdots & \vdots & \vdots & \vdots & \cdots & \vdots \\
                 \frac{1}{m} & \frac{1}{m} & \cdots & \frac{1}{m}& 0 & \cdots & 0 & 0 & 0 & \cdots & 0 \\
                 0 & 0 & \cdots & 0 & \frac{1}{m} & \cdots & 0 & 0 & 0 & \cdots & 0\\
                 \vdots & \vdots &\cdots &\vdots & \vdots &\ddots & \vdots &\vdots &\vdots& \cdots& \vdots \\
                 0 & 0 &\cdots & 0 & 0 &\cdots & \frac{1}{m} & 0 & 0 & \cdots & 0\\
                 0 & 0 &\cdots & 0 & 0 &\cdots & 0 & \frac{1}{m} & \frac{1}{m} & \cdots & \frac{1}{m}\\
                 0 & 0 &\cdots & 0 & 0 &\cdots & 0 & \frac{1}{m} & \frac{1}{m} & \cdots & \frac{1}{m}\\
                 \vdots & \vdots &\cdots & \vdots & \vdots &\cdots &\vdots & \vdots & \vdots& \ddots & \vdots\\
                 0 & 0 &\cdots & 0 & 0  &\cdots & 0 & \frac{1}{m} & \frac{1}{m} & \cdots & \frac{1}{m}\\
                 \end{array}
             \right)$$
in $M_{n!}\subseteq A$, and for $j=1,\ldots, m-1$, set $e_j =\alpha^{(n)}_{e^{2\pi ij/m}}(e_0)\in A$. One checks that
$e_0,\ldots,e_{m-1}$ are orthogonal projections with $\sum\limits_{j=0}^{m-1} e_j=1$, and moreover that $\alpha^{(n)}_{e^{2\pi i/m}}(e_{m-1})=e_0$. \\
\indent By construction, these projections are cyclically permuted by the action $\alpha|_m$ and they sum up to one, so we only need to check that they almost commute with the given finite set. The projections $e_0,\ldots,e_{m-1}$ exactly commute with the elements of $F'$. Thus, if $k\in\{1,\ldots,N\}$ and $j\in\{0,\ldots,m-1\}$, then
\begin{align*} \|a_ke_j-e_ja_k\|&\leq \|a_ke_j-b_ke_j\|+\|b_ke_j-e_jb_k\|+\|e_jb_k-e_ja_k\|\\
&<\frac{\ep}{2}+\frac{\ep}{2}=\varepsilon,\end{align*}
and hence $\alpha|_m$ has the \Rp. \end{eg}

The phenomenon exhibited in the example above is not special to UHF-algebras:

\begin{eg}\label{all restrictions have Rp, but action does not on pi}
If $A$ and $\alpha$ are as in \autoref{all restrictions have Rp, but action does not}, set 
$B=A\otimes\OI$ and let $\beta\colon\T\to\Aut(B)$ be given by $\beta_\zeta=\alpha_\zeta\otimes\id_{\OI}$ 
for all $\zeta\in\T$. Then $B$ is a unital Kirchberg algebra satisfying the UCT. We claim that the action $\beta$ does 
not have the \Rp. To see this, observe first that the fixed point algebra $A^\alpha$ can be written as an inductive
limit $A^\alpha=\varinjlim M_{n!}^{\alpha^{(n)}}$, and that 
\[M_{n!}^{\alpha^{(n)}}=\{a\in M_{n!}\colon a \mbox{ commutes with } \diag(1,\zeta,\ldots,\zeta^{n!-1}) \ \forall \ \zeta\in\T\}=\C^{n!}\subseteq M_{n!},\]
the last embedding being as diagonal matrices. In particular, the fixed point algebra $B^\beta=A^\alpha\otimes \OI$ is purely infinite
but not simple, and thus the Rokhlin property for $\beta$ would contradict Theorem~6.3 in~\cite{Gar_Kir1}. 

On the other hand, for every $m\in\N$, the restriction $\beta|_m\colon \Z_m\to\Aut(B)$ has the \Rp\ by part~(i) of Theorem~2
in~\cite{Santiago}, being a
tensor product of an action with the Rokhlin property (namely $\alpha|_m$) and another action.
\end{eg}

%\bibliographystyle{amsalpha}
%\bibliography{References}

\end{document}